\theoremstyle{plain}
\newtheorem{thm}{Theorem}
\newtheorem{lem}{Lemma}
\newtheorem{prop} {Proposition}
\newtheorem*{cor}{Corollary}
\theoremstyle{definition}
\newtheorem{exmp}{Example}
\theoremstyle{remark}
\newtheorem*{rem}{Remark}
 \def\footnote#1{{}}
\def\Pr{\mathbb{P}}
\def\E{\mathbb{E}}
\title{Probabilities of concurrent extremes}
\author{Clément Dombry$^\ast$ \and Mathieu Ribatet$^{\dag,\ddag}$ \and Stilian Stoev$^\lozenge$}
\begin{document}
\maketitle
\begin{center}
  $^\ast$ Department of Mathematics, University of Franche-Comté,
  Besançon, FRANCE

  $^\dag$ Department of Mathematics, University of Montpellier,
  Montpellier, FRANCE

  $^\ddag$ Institute of finance and insurance sciences, University of
  Lyon 1, Lyon, FRANCE

  $^\lozenge$ Department of Statistics, University of Michigan, Ann
  Arbor, USA
\end{center}

\begin{abstract}
  The statistical modelling of spatial extremes has recently made
  major advances. Much of its focus so far has been on the modelling
  of the magnitudes of extreme events but little attention has been
  paid on the timing of extremes. To address this gap, this paper
  introduces the notion of \emph{extremal concurrence}. Suppose that
  one measures precipitation at several synoptic stations over
  multiple days. We say that extremes are concurrent if the maximum
  precipitation over time at each station is achieved simultaneously,
  e.g., on a single day. Under general conditions, we show that the
  finite sample concurrence probability converges to an asymptotic
  quantity, deemed \emph{extremal concurrence probability}. Using Palm
  calculus, we establish general expressions for the extremal
  concurrence probability through the max-stable process emerging in
  the limit of the componentwise maxima of the sample. Explicit forms
  of the extremal concurrence probabilities are obtained for various
  max-stable models and several estimators are introduced. In
  particular, we prove that the pairwise extremal concurrence
  probability for max-stable vectors is precisely equal to the
  Kendall's $\tau$. The estimators are evaluated by using simulations
  and applied to study the concurrence patterns of temperature
  extremes in the United States. The results demonstrate that
  concurrence probability can provide a powerful new perspective and
  tools for the analysis of the spatial structure and impact of
  extremes.

  \bigskip
  \noindent \textbf{Keywords:} Max-stable process, Poisson point
    process, Slyvniak formula, Concurrence, Kendall's $\tau$, Temperature.
\end{abstract}

\section{Introduction}
\label{sec:introduction}

While most of the time extreme value analysis focuses on the magnitude
of extreme events, i.e., how large extremes events are, little
interest has been paid to their genesis. This paper tries to fill in
this gap by looking at what we shall call \emph{concurrency of
extremes}, e.g., have two locations been impacted by the same
extreme event or was it a consequence of two different ones? For
example, one could observe daily rainfall at various weather stations
and would like to quantify the risk that the rainfall extremes over a
spatial domain are due to a single extreme event, i.e., a large storm,
affecting the entire area. Although potentially rare, such events have
great socio-economic consequences and their probabilities should be
assessed precisely.

More formally, given a sequence $X_1, \ldots, X_n$ of independent
copies of a stochastic process $X$ defined on a compact set
$\mathcal{X} \subset \mathbb{R}^d$, $d \geq 1$, we say that extremes
are \emph{sample concurrent} at locations
$s_1,\ldots,s_k \in \mathcal{X}$, $k \geq 2$, if
\begin{equation}
  \label{eq:def-concurrence}
  \max_{i = 1, \ldots, n} X_i(s_j) = X_\ell(s_j), \qquad j = 1,
  \ldots, k,
\end{equation}
for some $\ell \in \{1, \ldots, n\}$. Clearly this means that only the
observation $X_\ell$ contributes to the pointwise maxima at
locations $s_1, \ldots, s_k$. It occurs with probability
\begin{equation}
  \label{eq:def-emp-conc-prob}
  p_n(s_1, \ldots, s_k) = \mathbb{P}\left[\text{for some $\ell \in
      \{1,\ldots, n\}\colon \max_{i = 1, \ldots, n} X_i(s_j) =
      X_\ell(s_j)$, $j = 1, \ldots, k$} \right],
\end{equation}
henceforth referred to as \emph{sample concurrence
  probability}.

Provided that $X$ has continuous margins it is not difficult to see
that
\begin{equation*}
  p_n(s_1, \ldots, s_k) = n \mathbb{E} \left[ F\left\{X(s_1), \ldots,
      X(s_k)\right\}^{n-1} \right],
\end{equation*}
where $F$ is the multivariate cumulative distribution of $\{X(s_1),
\ldots, X(s_k)\}$. Interestingly, concurrence of extremes event is invariant
under increasing transformations of the marginals so that the
concurrence probability does not depend on the marginal distributions
of $X$ but only on its dependence structure, i.e., the copula $C$
associated to $F$.

One drawback of the sample concurrence probability
$p_n(s_1, \ldots, s_k)$ is that it varies with the number of
observations $n$.  Surprisingly, however, we prove in
Theorem~\ref{thm:conv} below that, under mild regularity conditions,
this quantity stabilizes to a universal large sample limit
\begin{equation}
  \label{eq:limit}
   p_n(s_1, \ldots, s_k) \longrightarrow p(s_1, \ldots, s_k), \qquad
   n\to\infty.
\end{equation}
Throughout this paper, we will call the limiting probability
$p(s_1, \ldots, s_k)$ the \emph{extremal concurrence
  probability}. This asymptotic quantity is naturally expressed in
terms of a max-stable process $\eta$ emerging in the limit of the
normalized maxima in~\eqref{eq:def-concurrence}, as $n\to\infty$.  A
direct, intuitive, and equivalent definition of the \emph{extremal
  concurrence probability} can be given in terms of the spectral
representation of this max-stable process $\eta$.

Following~\citet{deHaan1984}, \cite{Penrose1992} and
\citet{Schlather2002}, let
\begin{equation}
  \label{eq:spectralCharacterization}
  \eta(s) = \max_{i \geq 1} \zeta_i Y_i(s) \qquad s \in \mathcal{X},
\end{equation}
where $\{\zeta_i\colon i \geq 1\}$ are the points of a Poisson process
on $(0, \infty)$ with intensity measure $\zeta^{-2} \mbox{d$\zeta$}$,
$Y_i$ are independent copies of a non negative stochastic process with
continuous sample paths such that $\mathbb{E}\{Y(s)\} = 1$ for all $s
\in \mathcal{X}$ and $\mathbb{E}\{\sup_{s \in \mathcal{X}} Y(s)\} <
\infty$.  It is often more convenient to
rewrite~\eqref{eq:spectralCharacterization} into
\begin{equation}
  \label{eq:spectralCharacterizationv2}
  \eta(s) = \max_{\varphi \in \Phi} \varphi(s), \qquad s \in \mathcal{X},
\end{equation}
where $\Phi = \{\varphi_i\colon i \geq 1\}$ with
$\varphi_i = \zeta_i Y_i$ is a Poisson point process on
$\mathbb{C}_0$, the space of non negative continuous functions on
$\mathcal{X}$. Within this framework, we now say that extremes are
concurrent at $s_1,\ldots,s_k \in \mathcal{X}$ if
\begin{equation}
  \label{eq:def-concurrence2}
  \eta(s_j) = \varphi_\ell(s_j), \qquad j = 1, \ldots, k,
\end{equation} 
for some $\ell \geq 1$, and similarly to the definition of the
sample concurrence probability~\eqref{eq:def-emp-conc-prob}, the 
extremal concurrence probability is defined by
\begin{equation}
  \label{eq:def-ext-conc-prob}
  p(s_1, \ldots, s_k) = \Pr\left\{\text{for some $\ell\geq 1\colon
      \eta(s_j) = \varphi_\ell(s_j)$, $j = 1, \ldots, k$}
  \right\}.
\end{equation}

When $k=2$ the extremal concurrence probability $p(s_1, s_2)$
coincides with the dependence measure considered
in~\cite{weintraub:1991} to study mixing properties of max-stable
processes. Another well known measure of dependence is the pairwise
extremal coefficient \citep{Schlather2003,Cooley2006}
\begin{equation}
  \label{eq:extremalCoefficient}
  \theta(s_1, s_2) = - \log \Pr\{\eta(s_1) \leq 1, \eta(s_2) \leq 1\},
  \qquad s_1, s_2 \in \mathcal{X}.
\end{equation}
Interestingly, the extremal concurrence probability and the pairwise
extremal coefficient share connections. For instance Proposition 5.1
in~\cite{stoev:2008} implies
\begin{equation}
  \label{eq:comparison}  
  \frac{1}{2} \{2-\theta(s_1,s_2)\} \leq p(s_1,s_2) \leq 2 \{
  2-\theta(s_1,s_2) \},
\end{equation}
and we shall see later that the properties of the extremal concurrence
probability are similar to that of the pairwise extremal coefficient.

The structure of the paper is as follows. In
Section~\ref{sec:concurrent-extremes} we make connections between
sample concurrence probabilities and their extremal counterparts and
derive their properties.  Section~\ref{sec:clos-forms} gives closed
forms for various parametric max-stable models, and
Section~\ref{sec:stat-infer} introduces various estimators for the
sample/extremal concurrence probabilities. The proposed estimators are
then analyzed in a simulation study in
Section~\ref{sec:simul-study-appl} and applied to US continental
temperature extremes in Section~\ref{sec:application}.

\section{Concurrence of extremes} 
\label{sec:concurrent-extremes}

In this section we show that sample concurrence probabilities converge
to extremal concurrence ones under rather mild domain of attraction
conditions. We then provide formulas for the extremal concurrence
probability based on the spectral representation of the associated
max-stable process and establish their basic properties.
 
\subsection{Sample and extremal concurrence}
\label{sec:sample-and-ext-concurrence}

Concurrence of extremes can be defined through the more general notion
of a \emph{hitting scenario}, which reflects precisely how many
different events contribute to the componentwise maximum. Let
$X_1, \ldots, X_n$ be a sequence of independent copies of a stochastic
process $X$ defined on $\mathcal{X}$ and
$s_1, \ldots, s_k \in \mathcal{X}$ be different locations. We suppose
that $X$ has continuous marginals to ensure that
$\{X_1(s_j), \ldots, X_n(s_j)\}$ has no ties almost surely and that
the maximum is uniquely reached. Let
$M_n(s) = \max_{i=1, \ldots, n} X_i(s)$ be the componentwise maximum
and consider the sets $C_i = \{ j\colon M_n(s_j) = X_i(s_j)\}$,
$i = 1, \ldots, n$, that account for the location where the $i$-th
component $X_i$ dominates the rest. Some of these sets may be empty,
but from the above discussion, with probability one the non-empty ones
are disjoint and form a random partition of $\{1, \ldots, k\}$.  This
partition $\pi_n = \{ C_i\colon C_i\neq\emptyset \}$ will be referred
to as the \emph{sample hitting scenario}.

By analogy with extremal concurrence, one can define an \emph{extremal
  hitting scenario} associated to a max-stable process by using the
underlying Poisson point process
\citep{Wang2011,Dombry2013,Dombry2013c}. More precisely, for $\eta$ as
in~\eqref{eq:spectralCharacterizationv2}, the extremal hitting
scenario $\pi$ is defined as the random partition of
$\{1, \ldots, k\}$ such that two indices
$j_1, j_2 \in \{1, \ldots, k\}$ are in the same component of $\pi$ if
and only if
\begin{equation*}
  \underset{i \geq 1}{\arg \max}\ \varphi_i(s_{j_1}) = \underset{i \geq
    1}{\arg \max}\ \varphi_i(s_{j_2}).
\end{equation*}
Whatever type of concurrence is considered, i.e., sample
concurrence~\eqref{eq:def-concurrence} or extremal
concurrence~\eqref{eq:def-concurrence2}, extremes are said concurrent
if and only if $\pi_n = \{1, \ldots, k\}$ or $\pi = \{1, \ldots, k\}$.
The next theorem shows the convergence of the sample hitting scenario
to the extremal one.

\begin{thm}\label{thm:conv}
  Assume that $[h_1\{X(s_1)\}, \ldots, h_k\{X(s_k)\}]$ belongs to the
  maximum domain of attraction of $\{\eta(s_1), \ldots, \eta(s_k)\}$,
  for some strictly increasing deterministic functions $h_i$,
  $i=1, \ldots, k$. Then, the sample hitting scenario $\pi_n$
  converges weakly as $n \to \infty$ to the extremal hitting scenario
  $\pi$ associated to the max-stable process $\eta$.
\end{thm}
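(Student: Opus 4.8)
The plan is to recognise the sample and extremal hitting scenarios as one and the same deterministic functional evaluated on two point configurations, and then to deduce the weak convergence of $\pi_n$ to $\pi$ from the convergence of the underlying point processes via the continuous mapping theorem. Since a partition of the finite set $\{1,\ldots,k\}$ takes only finitely many values, weak convergence of $\pi_n$ to $\pi$ is equivalent to $\Pr[\pi_n=\tau]\to\Pr[\pi=\tau]$ for every partition $\tau$, and this will follow once the continuous mapping argument is justified.

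First I would reduce to a finite-dimensional problem. The sample hitting scenario $\pi_n$ depends on $X_1,\ldots,X_n$ only through the vectors $V_i=(X_i(s_1),\ldots,X_i(s_k))$, and, as already noted for concurrence, it is invariant under any coordinatewise strictly increasing transformation: replacing $X_i(s_j)$ by $h_j\{X_i(s_j)\}$ leaves each $\arg\max_i$ unchanged, hence leaves $\pi_n$ unchanged. I may therefore work directly with the transformed vectors $W_i=(h_1\{X_i(s_1)\},\ldots,h_k\{X_i(s_k)\})$, whose componentwise maxima lie in the domain of attraction of $(\eta(s_1),\ldots,\eta(s_k))$ by assumption.

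The key analytic input is the equivalence between the multivariate domain-of-attraction condition and the weak convergence of the empirical point process of suitably normalised observations,
\begin{equation*}
  N_n=\sum_{i=1}^n \delta_{T_n(W_i)}\ \Longrightarrow\ N,
\end{equation*}
where $T_n$ is the coordinatewise affine normalisation and $N$ is the Poisson point process with exponent measure $\mu$ governing $\eta$, the convergence being vague on the punctured space $E=[0,\infty]^k\setminus\{0\}$ (see Resnick, de Haan and Ferreira). The vector $(\eta(s_1),\ldots,\eta(s_k))$ is recovered as the coordinatewise supremum over the atoms of $N$, and, identically to the construction from $\Phi$, the extremal hitting scenario $\pi$ is exactly the partition read off from the $\arg\max$ atoms of $N$. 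Because $T_n$ is coordinatewise increasing, $\pi_n$ is the very same functional applied to $N_n$: writing $\Pi$ for the map sending a locally finite configuration to the partition in which $j_1\sim j_2$ if and only if a single atom realises the maximum in both coordinates $j_1$ and $j_2$, we have $\pi_n=\Pi(N_n)$ and $\pi=\Pi(N)$.

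It then remains to invoke the continuous mapping theorem, for which I must control the discontinuity set $D_\Pi$ of $\Pi$. The map $\Pi$ is continuous at every configuration for which, in each coordinate, the maximum is attained by a unique atom and strictly exceeds every other coordinate value (a positive gap), so that a small vague perturbation neither destroys the attaining atom nor changes which atom attains it; only configurations with a tie in some coordinate, or with a supremum that is not attained, can be points of discontinuity. The main obstacle, and the heart of the argument, is therefore to show $\Pr[N\in D_\Pi]=0$. This I would establish from two properties of the limit: the exponent measure $\mu$ has non-atomic one-dimensional marginals, a consequence of the continuity of the margins of $\eta$ (equivalently of $\mathbb{E}\{Y(s)\}=1$ with continuous sample paths), so that almost surely no two atoms of $N$ share a coordinate value and each coordinatewise maximum has a strict gap to the runner-up; and the integrability $\mathbb{E}\{\sup_{s\in\mathcal{X}}Y(s)\}<\infty$, which guarantees that the suprema are almost surely attained by genuine atoms rather than approached along an escaping sequence. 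Finitely many atoms dominate each coordinate above any high threshold, so only the top atoms matter and the vague convergence $N_n\Rightarrow N$ transfers to them. With $\Pr[N\in D_\Pi]=0$, the continuous mapping theorem yields $\Pi(N_n)\Rightarrow\Pi(N)$, that is $\pi_n\Rightarrow\pi$.
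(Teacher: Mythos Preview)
Your proposal is correct and follows essentially the same route as the paper: reduce by monotone invariance, pass from the domain-of-attraction hypothesis to vague convergence of the empirical point process to the limiting Poisson process, recognise both hitting scenarios as the same functional $\Pi$ of these point configurations, and apply the continuous mapping theorem after checking that $\Pi$ is a.s.\ continuous at the limit (unique maxima). The paper relegates the continuity verification to an appendix, while you sketch it via non-atomicity of the exponent measure's marginals and finiteness of the number of relevant atoms; otherwise the arguments coincide.
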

\begin{proof}
  Let $X_1, X_2, \ldots$ be a sequence of independent copies of $X$
  observed at some locations
  $s = (s_1, \ldots, s_k) \in \mathcal{X}^k$, $k \geq 2$. For brevity
  we will use componentwise algebra (sum, maximum, etc.) and write
  $X(s) = \{X(s_1), \ldots, X(s_k)\}$. Since the hitting scenario is
  invariant to strictly increasing deterministic transformations of
  the marginals, we can assume without loss of generality that
  $h_i(x) = x$, $i=1, \ldots, k$. By assumptions the componentwise
  maxima
  $M_n(s) = \{\max_{i=1, \ldots, n} X_i(s_1), \ldots, \max_{i=1,
    \ldots, n} X_i(s_k)\}$ converge in distribution
  \begin{equation*}
    \frac{M_n(s) - b_n(s)}{a_n(s)} \longrightarrow \eta(s), \qquad n \to
    \infty,
  \end{equation*}
  where $a_n(s) > 0$ and $b_n(s) \in \mathbb{R}$. It is well known
  that the above convergence is equivalent to the convergence of the
  point process
  \begin{equation*}
    \Phi_n = \left\{ \frac{X_i(s) - b_n(s)}{a_n(s)}\colon i = 1, \ldots,
      n\right\} \subset \mathbb{R}^k.
  \end{equation*}
  to a Poisson point process $\Phi$ on
  ${[0, \infty)}^k \setminus \{0\}$ where the convergence is meant in
  the space of point measures
  $\mathscr{M}_p({[0, \infty]}^k \setminus \{0\})$ equipped with the
  metric of vague convergence \citep{resnick:1987}.
 
  Consider the mapping
  \begin{align*}
    \Pi \colon \mathscr{M}_p({[0, \infty]}^k \setminus \{0\})
    &\longrightarrow \mathscr{P}_k\\
    \Psi &\longmapsto \Pi(\Psi),
  \end{align*}
  where $\mathscr{P}_k$ is the set of all possible partitions of
  $\{1, \ldots, k\}$ and $\Pi(\Psi)$ is the hitting scenario
  associated to the collection of functions
  $\Psi=\{\psi_i\colon i \in I\}$, $I \subseteq \mathbb{N}$. Clearly,
  $\pi_n=\Pi(\Phi_n)$ and $\pi=\Pi(\Phi)$. Since the map $\Pi$ is well
  defined and continuous at each point
  $\Psi \in \mathscr{M}_p({[0, \infty]}^k \setminus \{0\})$ for which
  the maxima are uniquely defined, see
  Appendix~\ref{sec:proof-continuity-theta}, we can apply the
  continuous mapping theorem to show that the weak convergence
  $\Phi_n\to\Phi$ entails the weak convergence $\pi_n\to\pi$.
\end{proof}
\begin{rem}
  A direct consequence of Theorem~\ref{thm:conv} is that, provided the
  stochastic process $X$ is in the domain of attraction of some
  max-stable process $\eta$, the sample concurrence probability
  converges to its extremal counterpart, i.e.,
  \begin{equation*}
    p_n(s_1, \ldots, s_k) = \Pr\left[\pi_n=\{1, \ldots, k\}\right]
    \longrightarrow \Pr\left[\pi= \{1, \ldots, k\} \right]= p(s_1,
    \ldots, s_k), \qquad n \to \infty,
  \end{equation*}
  and proves~\eqref{eq:limit}.
\end{rem}

\subsection{General formulas and properties of extremal concurrence
  probabilities}
\label{sec:ECP-formulae}

The following theorem gives an expression for the extremal concurrence
probability $p(s_1, \ldots, s_k)$.
\begin{thm}\label{thm:thm1}
  We have
  \begin{equation}
    \label{eq:kExtremalConcurrenceProbabilityv1}
    p(s_1, \ldots, s_k) = \mathbb{E}_Y \left( \left[ \mathbb{E}_{\tilde Y}
      \left\{ \max_{j=1, \ldots, k} \frac{\tilde Y(s_j)}{Y(s_j)}
      \right\} \right]^{-1} \right),
  \end{equation}
  where $Y$ and $\tilde Y$ are independent copies of the stochastic
  process appearing in~\eqref{eq:spectralCharacterization}.
\end{thm}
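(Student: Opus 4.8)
The plan is to express the concurrence probability as the expected number of points of the Poisson process $\Phi$ that simultaneously dominate at all $k$ locations, and then evaluate this expectation via the Slivnyak--Mecke formula. Because $Y$ has continuous sample paths and the intensity measure is nonatomic, with probability one the maximum $\eta(s_j)$ is attained at a unique point of $\Phi$ for each $j$; consequently the number of indices $\ell$ for which $\varphi_\ell(s_j)=\eta(s_j)$ holds simultaneously for all $j$ is either $0$ or $1$, so its expectation equals $p(s_1,\ldots,s_k)$. Writing $\mu$ for the intensity measure of $\Phi$ on $\mathbb{C}_0$ --- the image of $\zeta^{-2}\,\mathrm{d}\zeta\otimes P_Y$ under $(\zeta,y)\mapsto\zeta y$ --- and applying Slivnyak--Mecke to the functional $g(\varphi,\Psi)=\mathbf{1}\{\varphi(s_j)>\psi(s_j)\ \forall\,\psi\in\Psi,\ \forall\,j\}$, I get
\[
p(s_1,\ldots,s_k)=\mathbb{E}\Big[\textstyle\sum_{\varphi\in\Phi} g(\varphi,\Phi\setminus\{\varphi\})\Big]=\int_{\mathbb{C}_0}\Pr\big[\varphi(s_j)>\eta(s_j),\ j=1,\ldots,k\big]\,\mu(\mathrm{d}\varphi),
\]
where on the right $\eta=\max_{\psi\in\Phi}\psi$ is built from $\Phi$ and, by Slivnyak's theorem, is independent of the fixed test function $\varphi$.

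Second, I would compute the inner probability using the finite-dimensional distribution of $\eta$. A direct computation of the mean number of points exceeding a threshold yields the standard max-stable law $\Pr[\eta(s_j)\le x_j,\ j=1,\ldots,k]=\exp(-\mathbb{E}_{\tilde Y}[\max_j \tilde Y(s_j)/x_j])$, where $\tilde Y$ is an independent copy of $Y$. Substituting $\varphi=\zeta y$ and the thresholds $x_j=\zeta\,y(s_j)$ then gives
\[
\Pr\big[\varphi(s_j)>\eta(s_j),\ \forall j\big]=\exp\!\Big(-\tfrac{1}{\zeta}\,\mathbb{E}_{\tilde Y}\big[\textstyle\max_{j}\tfrac{\tilde Y(s_j)}{y(s_j)}\big]\Big).
\]

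Finally, I would disintegrate the integral over $\mu$ into integrals against $P_Y(\mathrm{d}y)$ and $\zeta^{-2}\,\mathrm{d}\zeta$, and evaluate the $\zeta$-integral by the change of variable $u=1/\zeta$: setting $a(y)=\mathbb{E}_{\tilde Y}[\max_j \tilde Y(s_j)/y(s_j)]$, one has $\int_0^\infty e^{-a(y)/\zeta}\zeta^{-2}\,\mathrm{d}\zeta=\int_0^\infty e^{-a(y)u}\,\mathrm{d}u=1/a(y)$. This produces $p(s_1,\ldots,s_k)=\int_{\mathbb{C}_0}a(y)^{-1}\,P_Y(\mathrm{d}y)=\mathbb{E}_Y[a(Y)^{-1}]$, which is exactly \eqref{eq:kExtremalConcurrenceProbabilityv1}. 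The main obstacle I anticipate is the rigorous justification of the first step: establishing the almost sure uniqueness of the maximizers (so that the concurrence indicator coincides with the point count), verifying the integrability required to apply Slivnyak--Mecke, and handling the boundary cases $y(s_j)=0$ (covered by the convention $1/\infty=0$, consistent with such a $\varphi$ being unable to dominate). The remaining computations are routine once the max-stable distribution function is in hand.
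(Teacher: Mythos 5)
Your proposal is correct and follows essentially the same route as the paper: the paper's proof also applies Slivnyak's (Mecke's) formula to pass from the concurrence event to the integral $\int_{\mathbb{C}_0}\Pr\{f(s_j)>\tilde\eta(s_j),\ j=1,\ldots,k\}\,\Lambda(\mathrm{d}f)$ against the intensity measure, then invokes the max-stable distribution function and evaluates the $\zeta$-integral exactly as you do. Your explicit counting argument (the number of simultaneously dominating points is $0$ or $1$ almost surely, so its expectation equals the probability) is just a more careful spelling-out of the step the paper compresses into the phrase ``Slyvniak's formula.''
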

\begin{proof}
  Let $\Lambda$ denotes the intensity mesaure of the
  $\mathbb{C}_0$-valued Poisson point process $\Phi$
  in~\eqref{eq:spectralCharacterizationv2} given by
  \begin{equation*}
    \Lambda(A)=\int_0^\infty \Pr(\zeta Y \in A)\zeta^{-2}
    \mbox{d$\zeta$},
  \end{equation*}
  for all Borel set $A \subset \mathbb{C}_0$. We have
  \begin{align}
    p(s_1, \ldots, s_k) &= \Pr\left\{\exists \varphi \in \Phi\colon
      \varphi(s_1) = \eta(s_1),
      \ldots, \varphi(s_k) = \eta(s_k) \right\} \nonumber\\
    &= \int_{\mathbb{C}_0} \Pr\left\{f(s_j) > \tilde \eta(s_j), j = 1,
      \ldots, k \right\}\Lambda(\mbox{d$f$})\label{eq:thm1.1}\\
    &= \mathbb{E}_Y \left[ \int_0^\infty \Pr\left\{\tilde \eta(s_j) <
        \zeta Y(s_j), j = 1, \ldots, k \right\} \zeta^{-2}
      \mbox{d$\zeta$} \right] \nonumber\\
    &= \mathbb{E}_Y \left( \int_0^\infty \exp \left[-
        \mathbb{E}_{\tilde Y} \left\{ \max_{j=1, \ldots, k}
          \frac{\tilde Y(s_j)}{\zeta Y(s_j)} \right\} \right]
      \zeta^{-2} \mbox{d$\zeta$} \right) \nonumber \\
    &= \mathbb{E}_Y \left( \left[ \mathbb{E}_{\tilde Y}
        \left\{\max_{j=1, \ldots, k} \frac{\tilde Y(s_j)}{Y(s_j)}
        \right\} \right]^{-1} \right)\nonumber
  \end{align}
  where $\tilde \eta$ and $\tilde Y$ are independent copies of $\eta$
  and $Y$ respectively. Note that the second equality uses Slyvniak's
  formula, the fourth one the cumulative distribution of the
  max-stable process $\tilde\eta$ and the last one the expectation of
  an inverse exponential random variable.
\end{proof}

\begin{rem}
  By the seminal paper of~\cite{deHaan1984} (see
  also~\cite{stoev:taqqu:2005,kabluchko:2009}), any continuous in
  probability max-stable process can be represented as
  \begin{equation}
    \label{e:dehaan}
    \{\eta(s)\colon s \in \mathcal{X}\} \stackrel{\rm d}{=} \left\{
      \max_{i \geq 1} \zeta_i f_s(u_i)\colon s \in \mathcal{X}
    \right\},
  \end{equation}
  where $\{f_s\colon s \in \mathcal{X}\}$ is a collection of
  non-negative integrable functions on the space
  $(U, \mathcal{U}, \nu)$. Here $\{(\zeta_i,u_i)\colon i \geq 1\}$ is
  a Poisson point process on $(0,\infty) \times U$ with intensity
  $\zeta^{-2} \mbox{d$\zeta$} \nu(\mbox{d$u$})$.
  
  The functions $\{f_s\colon s \in \mathcal{X}\}$ are known as
  \emph{spectral functions} of $\eta$ and~\eqref{e:dehaan} as de
  Haan's spectral representation. When $\nu$ is a probability measure,
  one can view $Y(s)=f_s$ as random variables on the probability space
  $(U,{\cal U},\nu)$ and then~\eqref{e:dehaan}
  becomes~\eqref{eq:spectralCharacterization}. Conversely, any
  representation~\eqref{e:dehaan} can be cast in the
  form~\eqref{eq:spectralCharacterization} with a change of variables.
  Depending on the context one representation may be more convenient
  than the other. In terms of~\eqref{e:dehaan}, the concurrence
  probability formula in~\eqref{eq:kExtremalConcurrenceProbabilityv1}
  becomes
  \begin{equation}
    \label{eq:kExtremalConcurrenceProbability-deHaan-form}
    p(s_1, \ldots, s_k) = \int_U  \left[ \int_U \left\{ \max_{j=1,
          \ldots, k} \frac{f_{s_j}(\tilde u)}{f_{s_j}(u)} \right\}
      \nu(\mbox{d$\tilde u$}) \right]^{-1} \nu(\mbox{d$u$}),
  \end{equation}
  and the proof is essentially the same.
\end{rem}

One could expect from Definition~\ref{eq:def-concurrence2} and
Theorem~\ref{thm:thm1} that the extremal concurrence probability
depends on the distribution of the spectral process $Y$
in~\eqref{eq:spectralCharacterization} or the choice of spectral
functions in~\eqref{e:dehaan}. These representations are not unique,
but we will see in the theorem below that the extremal concurrence
probability $p(s_1, \ldots, s_k)$ depends only on the distribution of
the max-stable process $\eta$ and not on the choice of the specific
spectral representation.

\begin{thm}\label{thm:thm2}
  For $s_1, \ldots, s_k \in \mathcal{X}$, $k \geq 2$, we have
  \begin{equation}
    \label{eq:kExtremalConcurrenceProbability}
    p(s_1, \ldots, s_k) =  \sum_{r=1}^k (-1)^r \sum_{%
        \begin{smallmatrix}
          J \subseteq \{1, \ldots, k\}\\
          |J| = r
        \end{smallmatrix}
      }
       \mathbb{E}_{\tilde \eta} \left[ \log \Pr_\eta \left\{
          \eta(s_j) \leq \tilde \eta(s_j), j \in J \right\} \right],
  \end{equation}
  where $\tilde \eta$ is an independent copy of $\eta$.
  In particular when $k = 2$,
  \begin{equation}
    \label{eq:bivariateExtremalConcurrenceProbability}
    p(s_1, s_2) = 2 + \mathbb{E}_{\tilde \eta} \left[ \log \Pr_{\eta}
      \left\{ \eta(s_j) \leq \tilde \eta(s_j), j = 1, 2 \right\}
    \right].
  \end{equation}
\end{thm}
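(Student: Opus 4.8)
The plan is to build directly on Theorem~\ref{thm:thm1} and to eliminate the spectral process $Y$ in favour of two independent copies of the limiting max-stable process. Write $V(y_1,\ldots,y_k)=\mathbb{E}_Y\{\max_{j}Y(s_j)/y_j\}$ for the exponent function, so that $\Pr\{\eta(s_j)\le y_j,\ j=1,\ldots,k\}=\exp\{-V(y_1,\ldots,y_k)\}$ (this is exactly the intensity-of-exceedances computation already carried out in the proof of Theorem~\ref{thm:thm1}), and record that $V$ is homogeneous of degree $-1$, i.e.\ $t\,V(y)=V(y/t)$ for $t>0$. Theorem~\ref{thm:thm1} reads $p(s_1,\ldots,s_k)=\mathbb{E}_Y[\,V\{Y(s_1),\ldots,Y(s_k)\}^{-1}]$, so the whole problem is to re-express the reciprocal of an exponent function as a tractable expectation.

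The first step is to apply the elementary identity $a^{-1}=\int_0^\infty e^{-ta}\,\mathrm{d}t$ to the inner term and use homogeneity to recognise, for an independent copy $\tilde\eta$ of $\eta$,
\[
  e^{-t\,V(Y(s))}=e^{-V(Y(s)/t)}=\Pr_{\tilde\eta}\bigl\{\tilde\eta(s_j)\le Y(s_j)/t,\ j=1,\ldots,k\ \big|\ Y\bigr\}.
\]
Integrating in $t$, switching the order of integration (legitimate by non-negativity), and observing that $\tilde\eta(s_j)\le Y(s_j)/t$ for all $j$ is the event $\{t\le\min_j Y(s_j)/\tilde\eta(s_j)\}$ collapses the $t$-integral of this survival probability into an expectation, yielding the symmetric expression
\[
  p(s_1,\ldots,s_k)=\mathbb{E}_{Y,\tilde\eta}\Bigl[\,\min_{j=1,\ldots,k}\frac{Y(s_j)}{\tilde\eta(s_j)}\Bigr].
\]
This reinterpretation, together with checking integrability of the summands below, is the step I expect to be the main obstacle; the ingredients are the homogeneity of $V$ and a careful Fubini argument, and one must keep straight which copy of the process carries which expectation.

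The second step is purely combinatorial: for non-negative reals one has the dual inclusion--exclusion identity $\min_{j}a_j=\sum_{\emptyset\ne J\subseteq\{1,\ldots,k\}}(-1)^{|J|+1}\max_{j\in J}a_j$. Applying it with $a_j=Y(s_j)/\tilde\eta(s_j)$ and integrating termwise (each term is finite since $\mathbb{E}\{Y(s_j)\}=1$ and $\tilde\eta$ has unit Fréchet margins, so $\mathbb{E}\{1/\tilde\eta(s_j)\}=1$) reduces the computation to the subset sums $\mathbb{E}_{Y,\tilde\eta}\{\max_{j\in J}Y(s_j)/\tilde\eta(s_j)\}$. For fixed $\tilde\eta$ one identifies $\mathbb{E}_Y\{\max_{j\in J}Y(s_j)/\tilde\eta(s_j)\}$ with the exponent function of the subvector indexed by $J$, namely $-\log\Pr_\eta\{\eta(s_j)\le\tilde\eta(s_j),\ j\in J\}$. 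Substituting and tracking the sign, $(-1)^{|J|+1}\cdot(-1)=(-1)^{|J|}$, gives precisely \eqref{eq:kExtremalConcurrenceProbability}. Finally, the case $k=2$ in \eqref{eq:bivariateExtremalConcurrenceProbability} follows by evaluating the two singleton terms: since $\Pr_\eta\{\eta(s_j)\le y\}=e^{-1/y}$, one has $\mathbb{E}_{\tilde\eta}[\log\Pr_\eta\{\eta(s_j)\le\tilde\eta(s_j)\}]=-\mathbb{E}\{1/\tilde\eta(s_j)\}=-1$, so the $r=1$ contribution is the constant $2$ and the $r=2$ term is the remaining joint log-probability.
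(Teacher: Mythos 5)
Your proof is correct, and it reaches \eqref{eq:kExtremalConcurrenceProbability} by a route that genuinely differs from the paper's in its key first step. The paper does not argue from the \emph{statement} of Theorem~\ref{thm:thm1} but from the intermediate Slyvniak display~\eqref{eq:thm1.1} inside its proof: writing $A_j=\{f\colon f(s_j)>\tilde\eta(s_j)\}$, it expresses $p$ as $\mathbb{E}_{\tilde\eta}\{\Lambda(\cap_{j}A_j)\}$ and applies inclusion--exclusion at the level of the intensity measure $\Lambda$, identifying $\Lambda(\cup_{j\in J}A_j)$ with $-\log\Pr_\eta\{\eta(s_j)\le\tilde\eta(s_j),\ j\in J\}$. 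You instead treat Theorem~\ref{thm:thm1} as a black box and convert $V\{Y(s_1),\ldots,Y(s_k)\}^{-1}$, via $a^{-1}=\int_0^\infty e^{-ta}\,\mathrm{d}t$, homogeneity, and integration of the survival function, into the symmetric representation $p=\mathbb{E}\{\min_{j}Y(s_j)/\tilde\eta(s_j)\}$, and only then apply inclusion--exclusion in its scalar min/max form. The combinatorial core is the same---conditionally on $\tilde\eta$, your quantities $\mathbb{E}_Y\{\min_j Y(s_j)/\tilde\eta(s_j)\}$ and $\mathbb{E}_Y\{\max_{j\in J}Y(s_j)/\tilde\eta(s_j)\}$ are exactly $\Lambda(\cap_j A_j)$ and $\Lambda(\cup_{j\in J}A_j)$---but your derivation of the starting identity is new relative to the paper, and it buys three things. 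First, no further Palm calculus is needed beyond the already-proved Theorem~\ref{thm:thm1}, so your argument is self-contained at the level of statements rather than of proofs. Second, the union/intersection bookkeeping is done on real numbers where the identity $\min_j a_j=\sum_{\emptyset\neq J}(-1)^{|J|+1}\max_{j\in J}a_j$ is unambiguous; the paper's own first display in this proof writes ``for some $j$'' where the quantity inherited from~\eqref{eq:thm1.1} is the ``for all $j$'' set, a for-some/for-all slip (harmless in context, since the subsequent CDF identification fixes the intended reading) that your scalar formulation cannot make. Third, the intermediate formula $p=\mathbb{E}\{\min_j Y(s_j)/\tilde\eta(s_j)\}$ is of independent interest: conditioning on $Y$ and using $\mathbb{E}\{1/\tilde\eta(s_j)\}=1$ it immediately yields Lemma~\ref{lem:upper-bound}, and via $\min=\mathrm{sum}-\max$ (and its higher-order analogue) it gives Proposition~\ref{prop:extremalConcurrenceProbabilitiesToExtremalCoefficient}. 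Your integrability check $\mathbb{E}\{\max_{j\in J}Y(s_j)/\tilde\eta(s_j)\}\le|J|$ (by independence of $Y$ and $\tilde\eta$) and the evaluation of the singleton terms for $k=2$ are both correct; the paper's proof is shorter only because the Slyvniak display is already on the table.
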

\begin{proof}
  Starting from~\eqref{eq:thm1.1} and applying the inclusion-exclusion
  formula, we have
  \begin{align*}
    p(s_1, \ldots, s_k) &= \mathbb{E}_{\tilde \eta} \left[
      \Lambda\left( \left\{\text{$f(s_j) > \tilde \eta(s_j)$ for some
            $j \in \{1, \ldots, k\}$} \right\} \right) \right]\\
    &= \mathbb{E}_{\tilde \eta} \left[ \sum_{r=1}^k \sum_{%
        \begin{smallmatrix}
          J \subseteq \{1, \ldots, k\}\\
          |J| = r
        \end{smallmatrix}
      }
      (-1)^{r+1} \Lambda \left(\left\{f(s_j) > \tilde \eta(s_j), j \in
          J \right\} \right) \right].
  \end{align*}
  Since the cumulative distribution function of $\eta$ is
  $\Pr_\eta \left\{\eta(s_j) \leq \tilde \eta(s_j), j \in J \right\}
  =\exp\left[-\Lambda \left(\left\{f(s_j) > \tilde \eta(s_j), j \in J
      \right\} \right)\right]$, we get
  \begin{equation*}
    p(s_1,\ldots,s_k)= \sum_{r=1}^k (-1)^r \sum_{%
      \begin{smallmatrix}
        J \subseteq \{1, \ldots, k\}\\
        |J| = r
      \end{smallmatrix}
    }
    \mathbb{E}_{\tilde \eta} \left[ \log \Pr_\eta \left\{
        \eta(s_j) \leq \tilde \eta(s_j), j \in J \right\} \right].
  \end{equation*}
  The simplification when $k = 2$ is straightforward because when
  $|J| = 1$ we have
  \begin{equation*}
    \mathbb{E}_{\tilde \eta} \left[ \log \Pr_\eta \left\{\eta(s_j) \leq \tilde
        \eta(s_j), j \in J \right\} \right] =  \mathbb{E}_{\tilde \eta}
    \left\{ - \tilde \eta(s_j)^{-1} \right\} = - 1,    
  \end{equation*}
  as $\tilde \eta(s_j)^{-1}$ is a standard exponential random
  variable.
\end{proof}

In the remaining part of this section, we investigate some properties
of the extremal concurrence probabilities. Surprisingly, although the
two notions are different, we encounter strong similarities with the
extremal coefficient~\eqref{eq:extremalCoefficient}. We recall that
the extremal coefficient $\theta(s_1,s_2)$ takes values in $[1,2]$,
the lower and upper bounds correspond to perfect dependence and
independence respectively. The next proposition states a similar
result for the extremal concurrence probability.
\begin{prop}\label{p:ECP-basics}
  For all $s_1, s_2 \in \mathcal{X}$, we have
  \begin{itemize}
  \item[i)] $p(s_1,s_2)=0$ if and only if $\eta(s_1)$ and $\eta(s_2)$
    are independent;
  \item[ii)] $p(s_1,s_2)=1$ if and only if $\eta(s_1)$ and $\eta(s_2)$
    are almost surely equal.
  \end{itemize}
\end{prop}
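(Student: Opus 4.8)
The plan is to prove both equivalences using the closed-form expression for $p(s_1,s_2)$ in~\eqref{eq:bivariateExtremalConcurrenceProbability}, namely
\[
  p(s_1,s_2) = 2 + \mathbb{E}_{\tilde\eta}\left[\log \Pr_\eta\left\{\eta(s_1)\leq\tilde\eta(s_1),\ \eta(s_2)\leq\tilde\eta(s_2)\right\}\right].
\]
Since the concurrence event is invariant under increasing marginal transformations, I may assume standard unit Fréchet margins, so $\Pr\{\eta(s_1)\leq x_1,\eta(s_2)\leq x_2\}=\exp\{-V(x_1,x_2)\}$, where $V$ is the exponent function (homogeneous of degree $-1$) with univariate margins $V(x,\infty)=1/x$ and $V(\infty,x)=1/x$. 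The key device is to introduce the inner conditional probability $G(\tilde\eta)=\Pr_\eta\{\eta(s_1)\leq\tilde\eta(s_1),\eta(s_2)\leq\tilde\eta(s_2)\}=\exp\{-V(\tilde\eta(s_1),\tilde\eta(s_2))\}$, so that the formula reads $p(s_1,s_2)=2-\mathbb{E}_{\tilde\eta}\{V(\tilde\eta(s_1),\tilde\eta(s_2))\}$.

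\textbf{For part (i).} Independence of $\eta(s_1),\eta(s_2)$ corresponds exactly to $\theta(s_1,s_2)=2$, i.e.\ $V(x_1,x_2)=1/x_1+1/x_2$. First I would compute $p(s_1,s_2)$ under this hypothesis directly: using that $\tilde\eta(s_j)^{-1}$ is standard exponential with mean one, I get $\mathbb{E}_{\tilde\eta}\{V(\tilde\eta(s_1),\tilde\eta(s_2))\}=\mathbb{E}\{\tilde\eta(s_1)^{-1}\}+\mathbb{E}\{\tilde\eta(s_2)^{-1}\}=2$, hence $p=0$. For the converse, I would invoke the lower bound in~\eqref{eq:comparison}, $p(s_1,s_2)\geq\frac12\{2-\theta(s_1,s_2)\}$: since $\theta\leq 2$ always, $p(s_1,s_2)=0$ forces $\theta(s_1,s_2)=2$, which is precisely independence. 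This handles (i) cleanly via the already-cited bound, sidestepping any delicate analysis of $V$.

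\textbf{For part (ii).} Almost-sure equality $\eta(s_1)=\eta(s_2)$ corresponds to complete dependence, $\theta(s_1,s_2)=1$, i.e.\ $V(x_1,x_2)=\max(1/x_1,1/x_2)=1/\min(x_1,x_2)$. Under this hypothesis $\mathbb{E}_{\tilde\eta}\{V\}=\mathbb{E}\{1/\min(\tilde\eta(s_1),\tilde\eta(s_2))\}=\mathbb{E}\{\max(\tilde\eta(s_1)^{-1},\tilde\eta(s_2)^{-1})\}$; since the two coordinates are almost surely equal this is just $\mathbb{E}\{\tilde\eta(s_1)^{-1}\}=1$, giving $p=1$. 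For the converse, I would use the upper bound in~\eqref{eq:comparison}, $p(s_1,s_2)\leq 2\{2-\theta(s_1,s_2)\}$, so that $p(s_1,s_2)=1$ forces $2-\theta(s_1,s_2)\geq\frac12$, i.e.\ $\theta(s_1,s_2)\leq\frac32$. Unfortunately this bound alone is too weak to force $\theta=1$, so the clean inequality shortcut that works for (i) does not directly close part (ii).

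\textbf{The main obstacle} is therefore the ``only if'' direction of (ii): showing $p(s_1,s_2)=1\Rightarrow\eta(s_1)=\eta(s_2)$ a.s. Here I would argue directly from the spectral formula~\eqref{eq:kExtremalConcurrenceProbabilityv1} with $k=2$. Writing $R=\mathbb{E}_{\tilde Y}\{\max(\tilde Y(s_1)/Y(s_1),\tilde Y(s_2)/Y(s_2))\}$, we have $p=\mathbb{E}_Y(R^{-1})$. Since $\max(a,b)\geq\frac12(a+b)$ with the normalization $\mathbb{E}\{\tilde Y(s_j)\}=1$, one gets $R\geq\frac12\{1/Y(s_1)+1/Y(s_2)\}$, and more to the point $R\geq 1$ whenever, say, $\mathbb{E}\{\tilde Y(s_1)/Y(s_1)\}\geq 1$; by Jensen or a convexity argument $\mathbb{E}_Y(R^{-1})\leq 1$ always, with equality in $p=\mathbb{E}_Y(R^{-1})=1$ forcing $R=1$ for $Y$-almost every realization. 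Tracing back the equality case in the bound $\max(\tilde Y(s_1)/Y(s_1),\tilde Y(s_2)/Y(s_2))$ versus its constituents should force $\tilde Y(s_1)/Y(s_1)=\tilde Y(s_2)/Y(s_2)$ almost surely jointly in $(Y,\tilde Y)$, which in turn forces the ratio $Y(s_1)/Y(s_2)$ to be a deterministic constant; combined with $\mathbb{E}\{Y(s_j)\}=1$ this constant is $1$, yielding $\eta(s_1)=\eta(s_2)$ a.s. Pinning down this equality-in-Jensen step rigorously, and confirming that the resulting a.s.\ proportionality of spectral functions is equivalent to $\theta(s_1,s_2)=1$, is where the real work lies.
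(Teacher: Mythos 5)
Your part (i) and the ``if'' half of part (ii) are correct, and part (i) is essentially the paper's own argument: the paper reads both directions off \eqref{eq:comparison} (if $p=0$ the lower bound forces $\theta=2$; if $\theta=2$ the upper bound forces $p=0$), while you additionally verify one direction by direct computation, which is fine. Note only that your inequality $\mathbb{E}_Y(R^{-1})\leq 1$ is not a Jensen-type statement; it follows from the pointwise bound $R\geq \mathbb{E}_{\tilde Y}\{\tilde Y(s_{j^*})\}/Y(s_{j^*})=1/\min_j Y(s_j)$, where $j^*$ is the index achieving $\min_j Y(s_j)$.

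The genuine gap is exactly where you suspect it, and your proposed repair fails: the claim that $p(s_1,s_2)=\mathbb{E}_Y(R^{-1})=1$ forces $R=1$ for $Y$-almost every realization is false. Take any representation with $Y(s_1)=Y(s_2)=:Y$ almost surely, $Y$ non-degenerate with $\mathbb{E}(Y)=1$; then $R=\mathbb{E}_{\tilde Y}\{\max(\tilde Y(s_1),\tilde Y(s_2))\}/Y=1/Y$, which is not almost surely $1$, yet $p=\mathbb{E}_Y(Y)=1$. Accordingly, the trace-back you sketch (joint equality $\tilde Y(s_1)/Y(s_1)=\tilde Y(s_2)/Y(s_2)$, hence a deterministic ratio) is not what the equality case yields; what it yields is only the one-sided inequality $\tilde Y(s_{j'})/Y(s_{j'})\leq \tilde Y(s_{j^*})/Y(s_{j^*})$ for the non-minimizing index $j'$, which by itself does not pin down $Y(s_1)=Y(s_2)$. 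The correct, and much shorter, equality-case analysis runs through the bound you already have, $R^{-1}\leq \min_j Y(s_j)$, which is precisely Lemma~\ref{lem:upper-bound} of the paper: from
\begin{equation*}
  1=p=\mathbb{E}_Y(R^{-1})\leq \mathbb{E}\left\{\min\bigl(Y(s_1),Y(s_2)\bigr)\right\}\leq \mathbb{E}\{Y(s_1)\}=1
\end{equation*}
and the pointwise inequality $\min\bigl(Y(s_1),Y(s_2)\bigr)\leq Y(s_j)$, equality of the expectations forces $\min\bigl(Y(s_1),Y(s_2)\bigr)=Y(s_1)=Y(s_2)$ almost surely, hence $\eta(s_1)=\eta(s_2)$ almost surely. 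This is the paper's proof; no equality case in Jensen, no ratio-constancy argument, and no claim about $R$ itself is needed.
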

The proof uses the following generalization and improvement of the
upper bound in~\eqref{eq:comparison}.

\begin{lem}
  \label{lem:upper-bound}
  For all $s_1, \ldots, s_k \in \mathcal{X}$, $k \geq 2$, we have
  $p(s_1, \ldots, s_k) \leq \mathbb{E} \left\{ \min_{j=1,\ldots,k}
    Y(s_j) \right\}$.
\end{lem}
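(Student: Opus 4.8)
The plan is to work directly from the representation of $p(s_1,\ldots,s_k)$ furnished by Theorem~\ref{thm:thm1}, namely
\[
  p(s_1, \ldots, s_k) = \mathbb{E}_Y \left( \left[ \mathbb{E}_{\tilde Y}
      \left\{ \max_{j=1, \ldots, k} \frac{\tilde Y(s_j)}{Y(s_j)}
      \right\} \right]^{-1} \right),
\]
and to bound the integrand pointwise in $Y$. Since the integrand is the reciprocal of the inner $\tilde Y$-expectation, obtaining an upper bound on it amounts to producing a good \emph{lower} bound on $\mathbb{E}_{\tilde Y}\{\max_j \tilde Y(s_j)/Y(s_j)\}$.

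First I would fix a realization of $Y$ and let $i=i(Y)$ be an index achieving $\min_{j=1,\ldots,k} Y(s_j)$. The key elementary observation is that the maximum over $j$ dominates the single term at the index $i$, so that
\[
  \max_{j=1,\ldots,k} \frac{\tilde Y(s_j)}{Y(s_j)} \;\geq\; \frac{\tilde Y(s_i)}{Y(s_i)}.
\]
Crucially, $i$ depends only on $Y$ and not on $\tilde Y$, hence it is a constant for the purpose of the inner expectation; taking $\mathbb{E}_{\tilde Y}$ and using $\mathbb{E}\{\tilde Y(s_i)\}=1$ gives
\[
  \mathbb{E}_{\tilde Y}\left\{\max_{j=1,\ldots,k} \frac{\tilde Y(s_j)}{Y(s_j)}\right\} \;\geq\; \frac{1}{Y(s_i)} \;=\; \frac{1}{\min_{j=1,\ldots,k} Y(s_j)}.
\]
Inverting this inequality yields the pointwise bound $\bigl[\,\mathbb{E}_{\tilde Y}\{\max_j \tilde Y(s_j)/Y(s_j)\}\,\bigr]^{-1} \leq \min_{j} Y(s_j)$, and taking $\mathbb{E}_Y$ on both sides completes the argument.

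The proof is short, so the only real care is measure-theoretic, and that is where I would expect the minor obstacle to lie. I would check the degenerate behaviour on the event $\{\min_j Y(s_j)=0\}$: there the claimed bound reads $[\cdots]^{-1}\le 0$, which forces the integrand to vanish; this is consistent because if $Y(s_i)=0$ while $\tilde Y(s_i)>0$ with positive probability then the inner expectation is $+\infty$ and its reciprocal is $0$ (with the convention $1/\infty=0$). Since $\mathbb{E}\{Y(s)\}=1$ and $Y$ is non-negative with continuous paths, these null-set considerations are benign and do not affect the integral. I would emphasize that the choice of the \emph{minimizing} index is the whole point: selecting the coordinate where $Y$ is smallest maximizes the lower bound $1/Y(s_i)$, and this is precisely what sharpens the trivial estimate $p\le 1$ (obtainable, e.g., via the mediant inequality $\max_j a_j/b_j \ge \sum_j a_j/\sum_j b_j$) into the stated bound $p \le \mathbb{E}\{\min_j Y(s_j)\}$.
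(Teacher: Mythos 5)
Your proof is correct and takes essentially the same approach as the paper: both arguments lower-bound the inner expectation $\mathbb{E}_{\tilde Y}\{\max_{j} \tilde Y(s_j)/Y(s_j)\}$ by $\{\min_{j} Y(s_j)\}^{-1}$ using the pointwise domination of the maximum by a single coordinate together with $\mathbb{E}\{\tilde Y(s_j)\}=1$, then invert and apply $\mathbb{E}_Y$ via Theorem~\ref{thm:thm1}. The only differences are cosmetic---the paper takes $\max_{j} Y(s_j)^{-1}$ over all coordinates where you fix the minimizing index $i(Y)$, which is the same quantity---plus your added (sound, but inessential) discussion of the null event $\{\min_{j} Y(s_j)=0\}$.
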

\begin{proof}
  In the context of Theorem~\ref{thm:thm1}, we have (by conditioning
  on $Y$)
  \begin{equation*}
    \mathbb{E}_{\tilde Y}  \left\{ \max_{j=1, \ldots, k} \frac{\tilde Y(s_j)}{Y(s_j)}
    \right\}  \geq \max_{j=1,\ldots,k} Y(s_j)^{-1}
    \mathbb{E}_{\tilde Y} \left\{\tilde Y(s_j) \right\}= \left\{
      \min_{j=1,\ldots,k} Y(s_i) \right\}^{-1},
  \end{equation*}
  since $\E_{\tilde Y} \{\tilde Y(s_j)\} =1$. This, in view
  of~\eqref{eq:kExtremalConcurrenceProbabilityv1}
  implies the desired result.
\end{proof}

\begin{proof}[Proof of Proposition~\ref{p:ECP-basics}]
  Equation~\eqref{eq:comparison} implies that $p(s_1,s_2)=0$ if and
  only if $\theta(s_1,s_2)=2$ which is equivalent to the independence
  of $\eta(s_1)$ and $\eta(s_2)$. When $p(s_1,s_2)=1$,
  Lemma~\ref{lem:upper-bound} entails $Y(s_1) = Y(s_2)$ almost surely
  so that $\eta(s_1)=\eta(s_2)$ almost surely. It is easy to prove the
  converse implication: if $\eta(s_1)$ and $\eta(s_2)$ are almost
  surely equal, the same holds for $Y(s_1)$ and $Y(s_2)$ so that
  $p(s_1,s_2)=1$.
\end{proof}

Interestingly $p(s_1, \ldots, s_k)$ can be expressed via the extremal
coefficients of another max-stable process.
\begin{prop}
\label{prop:extremalConcurrenceProbabilitiesToExtremalCoefficient}
  Let $\eta$ be a simple max-stable process as defined
  in~\eqref{eq:spectralCharacterization} and
  $\tilde \eta, \tilde \eta_1, \tilde \eta_2, \ldots$ independent
  copies of it. Consider the simple max-stable process
  \begin{equation*}
    \xi(s) = \max_{i \geq 1} \zeta_i \frac{Y_i(s)}{\tilde \eta_i(s)},
    \qquad s \in \mathcal{X},
  \end{equation*}
  then
  \begin{equation*}
    p(s_1, \ldots, s_k) = \sum_{r=1}^k (-1)^{r+1} \sum_{%
      \begin{smallmatrix}
        J \subseteq \{1, \ldots, k\}\\
        |J| = r
      \end{smallmatrix}
    } \theta_\xi(s_j, j \in J),
  \end{equation*}
  where $\theta_\xi(s_j, j \in J) = -\log \Pr \{\xi(s_j) \leq 1, j \in
  J\}$. In particular $p(s_1, s_2) = 2 - \theta_\xi(s_1, s_2)$.
\end{prop}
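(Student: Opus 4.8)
The plan is to connect the two expressions for $p(s_1,\ldots,s_k)$: the one from Theorem~\ref{thm:thm2} involving $\mathbb{E}_{\tilde\eta}[\log\Pr_\eta\{\eta(s_j)\leq\tilde\eta(s_j),j\in J\}]$, and the target expression involving the extremal coefficients $\theta_\xi(s_j,j\in J)$ of the auxiliary process $\xi$. Comparing the two sums term by term, it suffices to show that for each nonempty $J\subseteq\{1,\ldots,k\}$,
\begin{equation*}
  \mathbb{E}_{\tilde\eta}\left[\log\Pr_\eta\left\{\eta(s_j)\leq\tilde\eta(s_j),\,j\in J\right\}\right] = \log\Pr\left\{\xi(s_j)\leq 1,\,j\in J\right\},
\end{equation*}
since then the $(-1)^r$ sum in~\eqref{eq:kExtremalConcurrenceProbability} matches $\sum_r(-1)^{r+1}\sum_{|J|=r}\theta_\xi(s_j,j\in J)$ after absorbing the sign into $\theta_\xi=-\log\Pr\{\cdots\}$.

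First I would compute the left-hand side explicitly. Using the cumulative distribution of the max-stable process $\eta$ in~\eqref{eq:spectralCharacterization}, we have $\Pr_\eta\{\eta(s_j)\leq\tilde\eta(s_j),j\in J\}=\exp[-\mathbb{E}_Y\{\max_{j\in J}Y(s_j)/\tilde\eta(s_j)\}]$, so that taking the logarithm and then the expectation over $\tilde\eta$ gives
\begin{equation*}
  \mathbb{E}_{\tilde\eta}\left[\log\Pr_\eta\{\eta(s_j)\leq\tilde\eta(s_j),j\in J\}\right] = -\,\mathbb{E}_{\tilde\eta}\,\mathbb{E}_Y\left\{\max_{j\in J}\frac{Y(s_j)}{\tilde\eta(s_j)}\right\}.
\end{equation*}
Next I would compute the right-hand side. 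The process $\xi$ is simple max-stable with spectral process $Y_i/\tilde\eta_i$, so its cumulative distribution function evaluated at level one yields $\log\Pr\{\xi(s_j)\leq1,j\in J\}=-\,\mathbb{E}\{\max_{j\in J}Y(s_j)/\tilde\eta(s_j)\}$, where the single expectation is over an independent pair $(Y,\tilde\eta)$. The two quantities coincide by Fubini's theorem, since $Y$ and $\tilde\eta$ are independent in both expressions. This identification is the crux; the rest is the term-by-term matching of the alternating sums.

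A point I would verify carefully is that $\xi$ is genuinely a simple max-stable process, i.e., that the spectral process $Y/\tilde\eta$ satisfies the normalization $\mathbb{E}\{Y(s)/\tilde\eta(s)\}=1$ for each $s$. Since $\tilde\eta(s)^{-1}$ is standard exponential with mean one and is independent of $Y$ with $\mathbb{E}\{Y(s)\}=1$, this holds; one should also confirm the integrability condition $\mathbb{E}\{\sup_s Y(s)/\tilde\eta(s)\}<\infty$ required in~\eqref{eq:spectralCharacterization}, which I expect to be the main technical obstacle and may require an additional moment assumption or an argument that the concurrence probability formula~\eqref{eq:kExtremalConcurrenceProbabilityv1} already encodes the relevant finiteness. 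Finally, the bivariate case $p(s_1,s_2)=2-\theta_\xi(s_1,s_2)$ follows by specializing $k=2$: the singleton terms contribute $\theta_\xi(s_j)=-\log\Pr\{\xi(s_j)\leq1\}=1$ each (since $\xi(s_j)$ is standard Fréchet), giving $\theta_\xi(s_1)+\theta_\xi(s_2)-\theta_\xi(s_1,s_2)=2-\theta_\xi(s_1,s_2)$, consistent with~\eqref{eq:bivariateExtremalConcurrenceProbability}.
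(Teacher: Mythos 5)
Your proof is correct and follows essentially the same route as the paper's: both reduce the claim, via Theorem~\ref{thm:thm2}, to the term-by-term identity $\mathbb{E}_{\tilde\eta}\left[\log \Pr_\eta\left\{\eta(s_j)\leq\tilde\eta(s_j),\, j\in J\right\}\right] = -\theta_\xi(s_j, j\in J)$, established by writing both sides as $-\mathbb{E}\left\{\max_{j\in J} Y(s_j)/\tilde\eta(s_j)\right\}$ using the spectral representations of $\eta$ and $\xi$ together with Fubini. The integrability concern you flag is immaterial here, since only the finite-dimensional distributions at $s_1,\ldots,s_k$ enter the formula and $\mathbb{E}\left\{\max_{j\in J} Y(s_j)/\tilde\eta(s_j)\right\}\leq |J|<\infty$; the paper likewise treats the simplicity of $\xi$ as immediate.
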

\begin{proof}
  Clearly $\xi$ is a simple max-stable process since both $Y$ and
  $\tilde \eta$ are non negative and $\mathbb{E}\{Y(s) / \tilde
  \eta(s)\} = 1$ for all $s \in \mathcal{X}$. We have
  \begin{equation*}
    \mathbb{E}_{\tilde \eta} \left[ \log \Pr_\eta \left\{\eta(s_j)
        \leq \tilde \eta(s_j), j \in J\right\} \right] = -
    \mathbb{E}_{\tilde \eta} \left[ \mathbb{E}_Y \left\{ \max_{j \in
          J} \frac{Y(s_j)}{\tilde \eta(s_j)} \right\} \right] = \log
    \Pr_\xi \left\{ \xi(s_j) \leq 1, j \in J\right\} = -
    \theta_\xi(s_j, j \in J).
  \end{equation*}
\end{proof}

The next corollary lists some properties of the extremal concurrence
probability function that closely parallel those of the extremal
coefficient function. In view of
Proposition~\ref{prop:extremalConcurrenceProbabilitiesToExtremalCoefficient},
the proof follows as in \citet{Schlather2003} or \citet{Cooley2006}.

\begin{cor}\label{cor:propertiesOfBivariateExtremalConcurrenceProbability}
  Let $p\colon h \mapsto p(o,h)$ be an extremal concurrence
  probability function associated to a stationary max-stable process
  in $\mathcal{X}$ for some arbitrary origin $o \in \mathcal{X}$ and
  $h \in \mathcal{X}$. Then the following assertions hold.
  \begin{itemize}
  \item[i)] The function $h \mapsto p(h)$ is positive semidefinite;
  \item[ii)] The function $h \mapsto p(h)$ is not differentiable at
    the origin unless $p(h) = 1$ for all $h \in \mathcal{X}$;
  \item[iii)] If $d \geq 1$ and if $\eta$ is isotropic, then $h
    \mapsto p(h)$ has at most a jump at the origin and is continuous
    elsewhere;
  \item[iv)] $\{2 - p(h_1 + h_2)\} \leq \{2 - p(h_1)\} \{2 - p(h_2)\}$
    for all $h_1, h_2 \in \mathcal{X}$;
  \item[v)] $\{2 - p(h_1 + h_2)\}^\alpha \leq \{2 - p(h_1)\}^\alpha +
    \{2 - p(h_2)\}^\alpha - 1$ for all $h_1, h_2 \in \mathcal{X}$ and
    $0 \leq \alpha \leq 1$;
  \item[vi)] $\{2 - p(h_1 + h_2)\}^\alpha \geq \{2 - p(h_1)\}^\alpha +
    \{2 - p(h_2)\}^\alpha - 1$ for all $h_1, h_2 \in \mathcal{X}$ and
    $\alpha < 0$.
  \end{itemize}
\end{cor}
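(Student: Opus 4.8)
The plan is to transfer each of the six assertions to a known property of the extremal coefficient function. By Proposition~\ref{prop:extremalConcurrenceProbabilitiesToExtremalCoefficient} we have $p(o,h)=2-\theta_\xi(o,o+h)$, where $\theta_\xi$ is the pairwise extremal coefficient function of the auxiliary simple max-stable process $\xi(s)=\max_{i\ge1}\zeta_iY_i(s)/\tilde\eta_i(s)$. Writing $\theta_\xi(h)=\theta_\xi(o,o+h)$, items (i)--(vi) become statements about $h\mapsto 2-\theta_\xi(h)$, and these are exactly the regularity and inequality properties of extremal coefficient functions of stationary (and, for (iii), isotropic) max-stable processes obtained by \citet{Schlather2003} and \citet{Cooley2006}. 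For this transfer to be legitimate one must know that $\xi$ is stationary whenever $\eta$ is, and isotropic whenever $\eta$ is; this is the main obstacle, since without it the cited results do not apply.

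First I would establish stationarity of $\xi$. The subtlety is that its spectral process $W=Y/\tilde\eta$ need not be stationary even when $\eta$ is, so stationarity cannot simply be read off the spectral representation; I would instead argue at the level of the exponent function $V(s_1,\ldots,s_m)=\mathbb{E}_{\tilde\eta}\mathbb{E}_Y[\max_jY(s_j)/\{x_j\tilde\eta(s_j)\}]$ of $\xi$, for fixed $x_1,\ldots,x_m>0$. Conditionally on $\tilde\eta$, the inner $Y$-expectation equals $-\log\Pr_\eta\{\eta(s_j)\le x_j\tilde\eta(s_j),\ j=1,\ldots,m\}$, which is invariant under a common translation $s_j\mapsto s_j+h$ because $\eta$ is stationary; this lets me replace $Y(s_j+h)$ by $Y(s_j)$ inside. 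Taking then the $\tilde\eta$-expectation and using that $\{\tilde\eta(s_j+h)\}_j$ and $\{\tilde\eta(s_j)\}_j$ share the same law, together with the independence of $Y$ and $\tilde\eta$, I can replace $\tilde\eta(s_j+h)$ by $\tilde\eta(s_j)$, yielding $V(s_1+h,\ldots,s_m+h)=V(s_1,\ldots,s_m)$. Hence $\xi$ is stationary; rerunning the same two-step argument with rotations in place of translations gives isotropy of $\xi$ whenever $\eta$ is isotropic.

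Item (i) I would then prove directly. From~\eqref{eq:bivariateExtremalConcurrenceProbability} and Proposition~\ref{prop:extremalConcurrenceProbabilitiesToExtremalCoefficient}, using $\mathbb{E}\{W(s)\}=1$ and $\min\{a,b\}+\max\{a,b\}=a+b$, one gets $p(s_1,s_2)=\mathbb{E}[\min\{W(s_1),W(s_2)\}]$; since $p$ inherits stationarity from $\eta$, this reads $p(h_i-h_j)=\mathbb{E}[\min\{W(h_i),W(h_j)\}]$ for any lags $h_1,\ldots,h_m$. Because $\min\{a,b\}=\int_0^\infty\mathbf{1}\{a>t\}\mathbf{1}\{b>t\}\,\mathrm{d}t$ is a positive semidefinite kernel, for all reals $a_1,\ldots,a_m$ one has $\sum_{i,j}a_ia_j\,p(h_i-h_j)=\mathbb{E}\int_0^\infty\{\sum_ia_i\mathbf{1}[W(h_i)>t]\}^2\,\mathrm{d}t\ge0$, so $h\mapsto p(h)$ is positive semidefinite.

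The remaining items follow by transporting, through $p=2-\theta_\xi$, the corresponding facts about the extremal coefficient function of the stationary (isotropic) process $\xi$: the non-differentiability at the origin unless the process is perfectly dependent (ii) and the ``continuous except for a possible jump at the origin'' behaviour in the isotropic case (iii) are the regularity results of \citet{Schlather2003}, while the submultiplicativity-type bound (iv) and the power inequalities (v)--(vi) are the extremal coefficient inequalities of \citet{Schlather2003} and \citet{Cooley2006}. The one step deserving real care is thus the stationarity and isotropy transfer of the auxiliary process $\xi$ in the second paragraph; once that is secured, (ii)--(vi) are immediate citations and (i) is the elementary kernel computation above.
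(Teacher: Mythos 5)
Your proposal is correct and takes essentially the same route as the paper, whose entire proof is the one-line observation that, by Proposition~\ref{prop:extremalConcurrenceProbabilitiesToExtremalCoefficient}, $p(o,h)=2-\theta_\xi(o,o+h)$ and the six assertions then follow from the extremal-coefficient results of \citet{Schlather2003} and \citet{Cooley2006}. Your explicit check that $\xi$ inherits stationarity (and isotropy) from $\eta$ via its exponent function, and your direct min-kernel argument for (i), are correct and merely fill in details the paper leaves implicit.
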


We conclude this section with an unexpected result that relates the
bivariate extremal concurrence probability with the well known
Kendall's $\tau$.

\begin{thm}\label{thm:p-tau}
  For any max-stable process $\eta$, we have $p(s_1,s_2) = \tau$ where
  $\tau = \mathbb{E} \left[\mbox{sign}\{\eta(s_1) - \eta_*(s_1)\}
    \mbox{sign}\{\eta(s_2) - \eta_*(s_2)\} \right]$
  is the Kendall's $\tau$ of $\{\eta(s_1), \eta(s_2)\}$ and $\eta_*$
  is an independent copy of $\eta$.
\end{thm}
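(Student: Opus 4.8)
The plan is to reduce the identity $p(s_1,s_2)=\tau$ to a one-dimensional statement about the \emph{Kendall distribution} of the bivariate extreme-value copula of $\{\eta(s_1),\eta(s_2)\}$. First I would start from the bivariate formula~\eqref{eq:bivariateExtremalConcurrenceProbability} of Theorem~\ref{thm:thm2} and recognise the conditional probability appearing there as the joint distribution function $G$ of $\{\eta(s_1),\eta(s_2)\}$ evaluated at an independent pair. Writing $W=G\{\tilde\eta(s_1),\tilde\eta(s_2)\}$, with $\tilde\eta$ an independent copy of $\eta$, this reads
\[
  p(s_1,s_2)=2+\mathbb{E}[\log W].
\]
Since $\tilde\eta\stackrel{\mathrm d}{=}\eta$, the variable $W$ is exactly $C(U_1,U_2)$ for $(U_1,U_2)\sim C$, where $C$ is the (extreme-value) copula of the pair; note that both $p(s_1,s_2)$ and $\tau$ depend only on $C$, being invariant under increasing marginal transformations.

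Next I would write Kendall's $\tau$ in copula form. The classical identity $\tau=4\,\mathbb{E}\{C(U_1,U_2)\}-1$, valid for any copula with continuous margins, becomes $\tau=4\,\mathbb{E}[W]-1$, equivalently $\mathbb{E}[W]=(1+\tau)/4$. Comparing with the previous display, the claim $p(s_1,s_2)=\tau$ is therefore \emph{equivalent} to the scalar identity $\mathbb{E}[\log W]=4\,\mathbb{E}[W]-3=\tau-2$. The entire content of the theorem is thus concentrated in the single quantity $\mathbb{E}[\log W]$.

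The heart of the proof — and the step I expect to be the main obstacle — is to evaluate $\mathbb{E}[\log W]$, which is where the max-stability of $C$ enters. For a bivariate extreme-value copula the Kendall distribution function $K(w)=\Pr\{C(U_1,U_2)\le w\}$ admits the explicit form $K(w)=w-(1-\tau)\,w\log w$ on $(0,1)$ (a known property of bivariate extreme-value copulas, due to Ghoudi, Khoudraji and Rivest). Granting this, differentiation gives the density $k(w)=\tau-(1-\tau)\log w$, and the two elementary integrals $\int_0^1\log w\,\mathrm{d}w=-1$ and $\int_0^1(\log w)^2\,\mathrm{d}w=2$ yield $\mathbb{E}[\log W]=\int_0^1(\log w)\,k(w)\,\mathrm{d}w=\tau-2$. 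Substituting into $p(s_1,s_2)=2+\mathbb{E}[\log W]$ gives $p(s_1,s_2)=\tau$.

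If a self-contained derivation of $\mathbb{E}[\log W]$ is preferred to quoting the Kendall distribution, I would instead establish the equivalent moment identity $\mathbb{E}[V\{\eta(s_1),\eta(s_2)\}]=2-\tau$, where $\exp\{-V(x_1,x_2)\}=\Pr\{\eta(s_1)\le x_1,\eta(s_2)\le x_2\}$ and $V$ is homogeneous of degree $-1$. Writing the joint density as $g=e^{-V}(V_1V_2-V_{12})$ and passing to coordinates adapted to the homogeneity — a radial variable $v=V(x)$ together with an angular variable on the level set $\{V=1\}$, using Euler's relation $x_1V_1+x_2V_2=-V$ — decouples a radial integral (producing the numerical constants through $\int_0^\infty v^{m}e^{-v}\,\mathrm{d}v$) from an angular integral that reproduces $\tau$ via the Pickands dependence function. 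Either route reduces to the same elementary one-dimensional integrals; the only genuinely delicate points are justifying the change of variables and the integrability near the boundary, which can be sanity-checked against the two extreme cases — independence ($\tau=0$, with $-\log W$ a $\mathrm{Gamma}(2,1)$ variable) and complete dependence ($\tau=1$, with $W$ uniform) — where $\mathbb{E}[\log W]=\tau-2$ holds by direct computation.
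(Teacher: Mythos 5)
Your proposal is correct and follows essentially the same route as the paper: both start from formula~\eqref{eq:bivariateExtremalConcurrenceProbability}, identify $W$ as the joint distribution function evaluated at an independent copy, invoke the Ghoudi--Khoudraji--Rivest form $\Pr(W\le w)=w-(1-\tau)w\log w$ of the Kendall distribution for bivariate max-stable vectors, and compute $\mathbb{E}[\log W]=\tau-2$ by elementary integration. Your explicit evaluation of the density $k(w)=\tau-(1-\tau)\log w$ and the integrals simply fills in what the paper compresses into ``after some simple calculations.''
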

\begin{proof}
  Let $W = F\{\eta(s_1), \eta(s_2)\}$ where $F$ is the bivariate
  cumulative distribution function of $\{\eta(s_1), \eta(s_2)\}$.
  From~\eqref{eq:bivariateExtremalConcurrenceProbability} we have
  $p(s_1,s_2) = 2 + \mathbb{E} \left(\log W \right)$. But since
  $\{\eta(s_1), \eta(s_2)\}$ is a bivariate max-stable random vector,
  we know that $\Pr(W \leq w) = w - (1 - \tau) w \log w$,
  $0 \leq w \leq 1$ \citep{ghoudi:khoudraji:rivest:1998} and hence,
  after some simple calculations, $p(s_1, s_2) = \tau$.
\end{proof}

\section{Formulas for extremal concurrence probabilities}
\label{sec:clos-forms}

In this section we gather formulas for the extremal concurrence
probabilities for some popular models of max-stable random vectors and
processes. As we will see, it is not always possible to get explicit
formulas, and in such situations, we propose to use Monte-Carlo
methods.

\subsection{Closed forms}
\begin{exmp}[Logistic model]\label{exmp:logistic}
  The concurrence probability for the $k$-variate logistic model,
  i.e., with cumulative distribution
  \begin{equation*}
    F(z_1, \ldots, z_k) = \exp \left\{ - \left(\sum_{j=1}^k
        z_j^{-1/\alpha} \right)^\alpha \right\}, \qquad 0 < \alpha
    \leq 1, \quad z_1, \ldots, z_k > 0,
  \end{equation*}
  is $p(s_1, \ldots, s_k) = \prod_{j=1}^{k-1} (1 - \alpha / j)$.
\end{exmp}

Recall that for this model independence is reached when $\alpha = 1$
while perfect dependence occurs as $\alpha \downarrow 0$ and, as
expected, for such situations we have $p(s_1, \ldots, s_k) = 0$ and
$p(s_1, \ldots, s_k) = 1$ respectively.

\begin{proof}
  It is not difficult to see that the multivariate logistic model
  corresponds to the case where $Y$
  in~\eqref{eq:spectralCharacterization} is a pure noise process with
  margins such that $\Pr\{Y(s) < y\} = \exp[- \{\Gamma(1 - \alpha)
  y\}^{-1/\alpha} ]$ where $\Gamma$ is the Gamma function. Using
  Theorem~\ref{thm:thm1}, we have
  \begin{align*}
    p(s_1, \ldots, s_k) &= \mathbb{E}_Y \left( \left[
        \mathbb{E}_{\tilde Y} \left\{ \max_{j=1, \ldots, k}
          \frac{\tilde Y(s_j)}{Y(s_j)} \right\} \right]^{-1} \right)
    = \mathbb{E}_Y \left( \left[ - \log F\{\tilde Y(s_1), \ldots,
        \tilde Y(s_k)\} \right]^{-1} \right)\\
    &= \mathbb{E}_Y \left[ \left\{\sum_{j=1}^k Y(s_j)^{-1/\alpha}
      \right\}^{-\alpha} \right] = \frac{\Gamma(k - \alpha)}{\Gamma(k)
      \Gamma(1 - \alpha)} = \prod_{j=1}^{k-1} (1 - \alpha / j),
  \end{align*}
  where the fourth equality used the fact that $\sum_{j=1}^k
  Y(s_j)^{-1/\alpha}$ is a Gamma random variable with scale $\Gamma(1
  - \alpha)^{1/\alpha}$ and shape $k$ for which negative moments are
  known.
\end{proof}

\begin{exmp}[Max-linear model]\label{exmp:max-linear}
  Consider the max-linear model $\eta(s) = \max_{m= 1, \ldots, n}
  \varphi_m(s) Z_m$, where $Z_1, \ldots, Z_n$ are independent unit
  Fréchet random variables and some functions $\{s \mapsto
  \varphi_m(s) \in \mathbb{C}_0, m = 1, \ldots, n\}$ such that
  $\sum_{m=1}^{n} \varphi_m(s) = 1$ for all $s \in \mathcal{X}$. We
  have  
  \begin{itemize}
  \item [i)] The concurrence probability equals 
    \begin{equation}
      \label{eq:p-ell} 
      p(s_1, \ldots, s_k) = \sum_{\ell=1}^n p_\ell(s_1, \ldots, s_k),
      \qquad p_\ell(s_1, \ldots, s_k) = \left\{ \sum_{m=1}^n
        \max_{j=1, \ldots, k} \frac{\varphi_m(s_j)}{\varphi_\ell(s_j)}
        \right\}^{-1},
  \end{equation}
  with the convention that $0/0 = 0$, $a/0 = \infty$ if $a>0$, and
  $1/\infty = 0$.
\item [ii)] The probability that component $\ell$ dominates at sites
  $s_1, \ldots, s_k$ is given by the term $p_\ell$
  in~\eqref{eq:p-ell}, i.e.,
    \begin{equation}
      \label{e:p-ell-max-linear}
      p_\ell(s_1, \ldots, s_k) = \Pr\left\{ \eta(s_j) =
        \varphi_\ell(s_j) Z_\ell,\ j = 1, \ldots, k \right\}    
    \end{equation}
    \end{itemize}
\end{exmp}
\begin{proof}
  Part i) is an immediate consequence
  of~\eqref{eq:kExtremalConcurrenceProbability-deHaan-form}. Indeed,
  let $U = \{1,\ldots,n\}$ be equipped with the counting measure
  $\nu\{1\} = \cdots = \nu\{n\} =1$. By taking
  $f_s(m) = \varphi_m(s)$, $m \in U$, we obtain that the max-linear
  model has the representation~\eqref{e:dehaan}.  The integral
  expression of the concurrence probability
  in~\eqref{eq:kExtremalConcurrenceProbability-deHaan-form} then
  becomes a sum of the terms $p_\ell$ in~\eqref{eq:p-ell}.
  
  Part ii) shows an intriguing fact that the concurrence probability
  $p(s_1,\ldots,s_k)$ for the max-linear model is the sum of the
  probabilities that one of the $n$ components dominates the rest.
  Indeed, by the max-stability property and the independence of the
  unit Fréchet random variables $Z_m$'s, we have that the right-hand
  side of~\eqref{e:p-ell-max-linear} equals
  \begin{equation*}
    \Pr \left\{ \max_{m \neq \ell}
      \max_{j=1,\ldots,k}\frac{\varphi_m(s_j)}{\varphi_\ell(s_j)} Z_m \leq
      Z_\ell \right\} = \Pr(a Z_1 \le Z_2),
  \end{equation*}
  where $a = \sum_{m \neq\ell}
  \max_{j=1,\ldots,k}{\varphi_m(s_j)/\varphi_\ell(s_j)}$.
  Equation~\eqref{e:p-ell-max-linear} follows from the fact that $\Pr(
  a Z_1 \leq Z_2) = (1+a)^{-1}$, $a \geq 0$.
  \end{proof}

\begin{exmp}[Chentsov random fields]
  Suppose that the process $Y(s) = 1_{A}(s)$, where
  $A\subset \mathbb{R}^d$ is a random set. Then, by analogy with the
  theory of symmetric $\alpha$-stable process
  \citep[Chap. 8]{samorodnitsky:taqqu:1994book}, the max-stable
  process
  \begin{equation*}
    \eta(s) = \max_{i \geq 1} \zeta_i 1_{A_i}(s), \qquad s \in
    \mathcal{X},
  \end{equation*}
  where $Y_i\equiv 1_{A_i}$ are independent copies of $Y\equiv 1_A$,
  will be referred to as a \emph{Chentsov max-stable random field} on
  $\mathcal{X}$.

  For a Chentsov-type max-stable process we have
  \begin{equation}
    \label{e:CP-Chentzov}
    p(s_1, \ldots, s_k) =
    \Pr \left ( \{s_1, \ldots, s_k\} \subset A \mid \{s_1,\ldots,s_k\}
      \cap A \neq \emptyset \right),
  \end{equation}
  or less formally that the extremal concurrence probability is the
  conditional probability that the sites $s_1, \ldots, s_k$ are
  covered by the random set $A$ given that at least one of the sites
  is covered.
\end{exmp}
\begin{proof}
  The result is an immediate consequence of
  Theorem~\ref{thm:thm1}. Since the outer expectation
  in~\eqref{eq:kExtremalConcurrenceProbabilityv1} can be restricted to
  the event $\{ \min_{j=1, \ldots, k} Y(s_j)>0\}$, we have
  \begin{align} \label{e:Chentzov-derivation}
    p(s_1, \ldots, s_k) &= \E_Y \left( \left[ \E_{\tilde Y} \left\{
          \max_{j=1, \ldots, k} \frac{1_{\tilde A}(s_j)}{1_{A}(s_j)}
        \right\} \right]^{-1} 1_{\{\min_{j=1, \ldots, k} 1_{A}(s_j) >
        0\}} \right)\nonumber\\
    &= \E_Y \left( \left[ \E_{\tilde Y} \left\{ \max_{j=1, \ldots, k}
          1_{\tilde A}(s_j) \right\} \right]^{-1} \min_{j=1, \ldots,
        k} 1_{A}(s_j) \right)\\
    &= \frac{\E \left\{ \min_{j=1, \ldots, k} 1_{A}(s_j) \right\}
    }{\mathbb{E} \left\{ \max_{j=1, \ldots, k} 1_{A}(s_j) \right\}}, \nonumber
  \end{align}
  where in the second relation we used the fact that $1_{\{\min_{j=1,
      \ldots, k} 1_A(s_j)\}} = \min_{j=1, \ldots, k} 1_A(s_j)$ and
  that the event $\min_{i=1,\ldots,k} 1_{A}(s_i)>0$ is equivalent to
  $\{1_A(s_1) = 1, \ldots, 1_A(s_k) = 1\}$. This
  proves~\eqref{e:CP-Chentzov}.
\end{proof}

\begin{exmp}[Extremal processes]
  The max-stable process $\{\eta(s)\colon s \in [0, 1]\}$ is an
  extremal process if it has stationary and independent
  max-increments, i.e.,
  \begin{equation*}
    \{\eta(s_1), \ldots, \eta(s_k)\} \stackrel{\rm d}{=} \left[ s_1
      Z_1, \max\{s_1 Z_1, (s_2 - s_1)Z_2\}, \ldots, \max\{s_1Z_1,
      \ldots, (s_k - s_{k-1})Z_k\} \right],
  \end{equation*}
  where $0 < s_1 < \cdots < s_k$ and $Z_1, \ldots, Z_k$ are
  independent unit Fréchet random variables. It can be shown that
  \begin{equation*}
    \eta(s) \stackrel{\rm d}{=} \max_{i \geq 1} \zeta_i 1_{[U_i,1]}(s),
    \qquad s \in [0,1],
  \end{equation*}
  where $U_i$'s are independent $U(0,1)$ random variables. Using our
  previous result on Chentsov random fields, we have for all
  $0< s_1 < \cdots <s_k \leq 1$
  \begin{equation*}
    p(s_1, \ldots, s_k) = \frac{\Pr\left( \{ s_1,\ldots, s_k\} \subset
        [U,1]\right)}{\Pr\left( \{ s_1,\ldots, s_k\} \cap [U,1] \neq
        \emptyset \right)} = \frac{\Pr(U \leq s_1)}{\Pr(U \leq s_k)} =
    \frac{s_1}{s_k},
  \end{equation*}
  where $U \sim U(0,1)$. This result is not surprising since for this
  simple case, extremes are concurrent at locations
  $0 < s_1< \cdots< s_k<1$ if $\eta(s)$ has no jumps in the interval
  $[s_1,s_k]$. Hence using the independence and stationarity of the
  max-increments, the probability of the latter event is
  $\Pr \{ s_1 Z_1 \geq (s_k - s_1) Z_2 \} = s_1/s_k$, where $Z_1$ and
  $Z_2$ are two independent unit Fréchet variables.
\end{exmp}

\begin{exmp}[Indicator moving maxima]
  In the context of~\eqref{e:dehaan}, if $f_s(u) = 1_{A_s}(u)$, for
  some sequence of measurable \emph{deterministic sets} $A_s$, by
  using~\eqref{eq:kExtremalConcurrenceProbability-deHaan-form}, we
  obtain as in~\eqref{e:Chentzov-derivation} that
  \begin{equation}
    \label{e:ECP-deterministic-indicators}
    p(s_1,\ldots,s_k) = \frac{\nu(\cap_{j=1,\ldots,k} A_{s_j})}{\nu
      (\cup_{j=1,\ldots,k} A_{s_j})}.
  \end{equation}
  In the simple case $f_s(u) = 1_A(u-s)$, i.e.\ $A_{s} = s+A$ with
  some deterministic set $A$, where $\nu$ is the Lebesgue measure on
  $\mathbb R^d$,~\eqref{e:ECP-deterministic-indicators}~implies
  \begin{equation*}
    p(s,s+h) = p(h)= \frac{|A \cap (h+A)|}{|A \cup (h+A)|} =
    \frac{c_A(h)}{2 |A| - c_A(h)},
  \end{equation*}
  where $|A|$ denotes the $d$-dimensional volume of $A$ and
  $c_A(h) = |A \cap (h+A)|$. The latter function and hence the
  extremal concurrence probability function $p(h)$ can then be
  obtained in closed form for many different sets.  For example, in
  the case $\eta$ is isotropic, i.e.,
  $A = \{ s\in\mathbb R^d\colon \|x\| \leq r\}$ is the centered ball
  of radius $r>0$ in Euclidean space, using the formula for the volume
  of the cap, we obtain
  \begin{equation*}
    c_A(\|h\|) = C_d r^d B_{(d+1)/2,1/2}\left\{\frac{\|h\|
        (2r-\|h\|)}{2r^2} \right\}, \qquad C_d =
    \frac{\pi^{d/2}}{\Gamma(1+d/2)},
  \end{equation*}
  where
  $B_{a,b}(x) = B(a,b)^{-1} \int_0^x u^{a-1}(1-u)^{b-1} \mbox{d$u$}$
  is the cumulative distribution function of a $\mbox{Beta}(a,b)$
  random variable.
\end{exmp}

\subsection{Monte-Carlo methods}\label{sec:Monte-Carlo}

It may happen that for some parametric max-stable models, explicit
forms for extremal concurrence probabilities are not available but
hopefully it is often possible to use Monte-Carlo methods to
approximate the theoretical extremal concurrence probabilities with
arbitrary precision. A naive strategy would consist in
using~\eqref{eq:kExtremalConcurrenceProbabilityv1} to devise a
Monte-Carlo estimator, but it is wiser to take advantage of the closed
forms of max-stable processes cumulative distributions, i.e.,
\begin{equation*}
  \Pr \left\{  \eta(s_j) \leq z_j, j = 1, \ldots, k
  \right\}=\exp\{-V_{s_1,\ldots,s_k}(z_1,\ldots,z_k)\},\qquad z_1,\ldots,z_k>0,
\end{equation*}
where $V_{s_1,\ldots,s_k}$ is an homogeneous function of order $-1$.
Rewriting~\eqref{eq:kExtremalConcurrenceProbabilityv1}, we found
\begin{align}
  p(s_1, \ldots, s_k) &= \mathbb{E}_Y \left( \left[ - \log \Pr_{\tilde
                        \eta} \left\{ \tilde \eta(s_j) \leq Y(s_j), j
                        = 1, \ldots, k 
                        \right\} \right]^{-1} \right)\nonumber\\
                      &= \mathbb{E}_Y \left( \left\{
                        V_{s_1,\ldots,s_k}[Y(s_1),\ldots,Y(s_k)]\right\}^{-1}
                        \right)
                        \label{eq:kExtremalConcurrenceProbabilityv3}
\end{align}
which can easily be estimated by sampling independent copies of $Y$
and computing the sample mean. We can often make use of antithetic
variables to get more precise estimates. Note that specific choice of
the spectral process $Y$ can lead to better strategies as we will
illustrate in the following examples.

\begin{exmp}[Brown--Resnick model]
\label{exmp:brownResnick}
  Let $\eta$ be a Brown--Resnick stationary random field on
  $\mathcal{X}$ driven by a Gaussian process
  \citep{kabluchko:schlather:dehaan:2009}. That is, the processes
  $Y_i$ in~\eqref{eq:spectralCharacterization} are equal in
  distribution to
  \begin{equation}
    \label{e:BR-def}
    Y(s) = \exp\{ W(s) - \gamma(s) \},\qquad s \in \mathcal{X},
  \end{equation}
  where $W$ is a zero mean Gaussian random field with stationary
  increments and semi-variogram $\gamma$, i.e.\ $2\gamma(h) = \E\{
  W(h)^2\} = \E[ \{W(s+h) - W(s)\}^2]$, $s, h \in \mathcal{X}$.
  
  For this model, the bivariate extremal concurrence probability
  function is given by 
  \begin{eqnarray}
    \label{eq:bivPForBrownResnick}
    p(o,h) = \mathbb{E} \left( \left[ \Phi(Z) + \exp\left\{ \gamma(h)
          - \sqrt{2 \gamma(h)} Z \right\} \Phi\left\{\sqrt{2 \gamma(h)}
          - Z \right\} \right]^{-1} \right),
  \end{eqnarray}
  where $Z \sim N(0,1)$ has the standard normal distribution with
  cumulative distribution function $\Phi$.  As expected $p(o) = 1$ and
  $p(h)\to 0$ as $\|h\| \to \infty$ provided that the semi-variogram
  is unbounded, i.e., $\gamma(h) \to \infty$ as $\|h\| \to \infty$.
\end{exmp}

\begin{proof}
  Without loss of generality, for Brown--Resnick processes we can
  assume that in~\eqref{eq:spectralCharacterization} we have $Y(o)=1$
  almost surely.  The bivariate cumulative function is given by
  $\Pr\{\eta(0) \leq z_1,\eta(h) \leq z_2 \} = \exp\{-V_h(z_1,z_2)\},
  \qquad z_1,z_2>0$ with
  \begin{equation*}
    V_h(z_1,z_2) = \frac{1}{z_1} \Phi\left(\sqrt{\gamma(h)/2} +
      \frac{1}{\sqrt{2\gamma(h)}}\log\frac{z_2}{z_1}\right) +
    \frac{1}{z_2}\Phi\left(\sqrt{\gamma(h)/2}+\frac{1}{\sqrt{2\gamma(h)}}\log\frac{z_1}{z_2}\right).
  \end{equation*}
  Equation~\eqref{eq:kExtremalConcurrenceProbabilityv3} together with
  the fact that $\{Y(o),Y(h)\}$ has the same distribution as
  $\{1,e^{\sqrt{2\gamma(h)}\, Z-\gamma(h)}\}$ entails
  \begin{equation*}
    p(o,h) = \mathbb{E} \left( \left\{ V_{h}[Y(o),Y(h)]\right\}^{-1}
    \right) = \mathbb{E} \left\{ \left(
        V_{h}\left[1,\exp\left\{\sqrt{2\gamma(h)}\, Z-\gamma(h) \right\}\right]\right)^{-1} \right\}.
  \end{equation*}
  Equation~\eqref{eq:bivPForBrownResnick} follows after
  straightforward simplifications.
\end{proof}

A popular special case of the the Brown--Resnick family of models is
the moving maximum storm model introduced by~\cite{Smith1990} known
also as the Gaussian extremal process. Consider the spectral
representation~\eqref{e:dehaan}, where $U = \mathbb{R}^d$ and $\nu$ is
the Lebesgue measure. Taking $f_s(u) = \varphi_\Sigma(s-u)$, where
$\varphi_\Sigma$ is the multivariate Normal density with zero mean and
covariance matrix $\Sigma$, we obtain the max-stable process
  \begin{equation}
    \label{eq:smith}
    \eta(s) = \max_{i\ge 1} \zeta_i \varphi_\Sigma(s - u_i), \qquad
    s\in \mathcal{X}.
  \end{equation}
  Then, the following are true:
  \begin{itemize}
  \item[i)] The process $\eta$ belongs to the family of degenerate
    Brown--Resnick models in~\eqref{e:BR-def} with $W(s)= s^\top Z$,
    $s\in {\mathbb R}^d$ and $Z \sim N(0,\Sigma^{-1})$.
 
  \item [ii)] Consequently, the concurrence probability function of
    $\eta$ is given by~\eqref{eq:bivPForBrownResnick} with $\gamma(h)
    = h^\top \Sigma^{-1} h /2 $.
 \end{itemize}

 \begin{proof} Let $\eta$ be a Brown--Resnick process with variogram
   $2 \gamma(h) = h^\top \Sigma^{-1} h$, that is, we can assume
   without loss of generality that in the spectral characterization we
   have $Y(s) = \exp(s^\top \Sigma^{-1} Z - s^\top \Sigma^{-1} s)$
   with $Z \sim N(0, \Sigma)$. Then for all
   $s_1, \ldots, s_k \in \mathcal{X}$ and $z_1, \ldots, z_k >0$ we
   have
  \begin{align*}
    -\log \Pr\{ \eta(s_j)\leq z_j, j=1,\ldots,k\} &= \E_Z \left\{
      \max_{j=1, \ldots, k} z_j^{-1}\exp\left(s_j^\top \Sigma^{-1} Z -
        \frac{1}{2}s_j^\top \Sigma^{-1}
        s_j \right) \right\} \\
    &= (2 \pi)^{-k/2} |\Sigma|^{-1/2} \int_{\mathbb R^d}
    \max_{j=1,\ldots,k} z_j^{-1} \exp\left(-\frac{1}{2} y^\top
      \Sigma^{-1} y + s_j^\top \Sigma^{-1} y - \frac{1}{2}s_j^\top
      \Sigma^{-1} s_j\right) \mbox{d$y$} \\
    &= \int_{\mathbb R^d} \max_{j=1, \ldots, k} z_j^{-1}
    \varphi_{\Sigma}(s_j- y) \mbox{d$y$}.
  \end{align*}
  The last relation equals the negative log cumulative distribution
  function of the moving maxima in~\eqref{eq:smith}.
\end{proof}

\begin{exmp}[Schlather and extremal-$t$ processes]
  Let $\eta$ be an extremal-$t$ process on $\mathcal{X}$, i.e., the
  processes $Y_i$ in~\eqref{eq:spectralCharacterization} are equal in
  distribution to
  \begin{equation*}
    Y(x) = c_\nu \max\{0, W(s)\}^\nu, \quad c_\nu=\sqrt{\pi} 2^{-(\nu-2)/2}
    \Gamma\left(\frac{\nu+1}{2}\right)^{-1}\qquad
    s \in \mathcal{X},
  \end{equation*}
  where $\nu \geq 1$ and $W$ is a stationary standard Gaussian process with
  correlation function $\rho$. The Schlather process is obtained when
  $\nu = 1$.
  
  The corresponding extremal concurrence probability function is
  \begin{equation*}
    p(o,h) = \E\left(\left[T_{\nu+1}(T) +
        \{\rho(h)+\sigma(h)T\}^{-\nu} T_{\nu+1}\left\{
          -\frac{\rho(h)}{\sigma(h)} +
          \frac{1}{\sigma(h)(\rho(h)+\sigma(h)T)}\right\} \right]^{-1}
      1_{\{\rho(h)+\sigma(h)T>0\}}\right)
  \end{equation*}
  where $\sigma(h)=\sqrt{\{1-\rho(h)^2\} / (1+\nu)}$ and $T$ is a
  Student random variable with $\nu+1$ degrees of freedom and
  cumulative distribution function $T_{\nu+1}$.
\end{exmp}
\begin{proof}
  For the notational convenience, we write shortly $\rho=\rho(h)$,
  $\sigma=\sigma(h)$. 
  To obtain the desired result, we use a different spectral
  representation. Comparing the two cumulative distribution functions,
  one can show that
  \begin{equation*}
    \{\eta(o),\eta(h)\} \stackrel{d}{=} \max_{i\geq 1} \zeta_i
    \{\tilde Y_i(o),\tilde Y_i(h)\}
  \end{equation*}
  with $\tilde Y_i$, $i\geq 1$, i.i.d.\ copies of the bivariate random
  vector
  \begin{equation*}
    \{\tilde Y(o), \tilde Y(h)\} = 
    \begin{cases}
      2(1,\max\{0,\rho+\sigma T\}^\nu), &\text{with probability 1/2}\\
      2 (0, c), &\text{with probability 1/2}
    \end{cases}
  \end{equation*}
  with $c=1-\mathbb{E}[\max(0,\rho+\sigma T)^\nu]$ such that
  $\mathbb{E}(\tilde
  Y)=(1,1)$.
  Equation~\eqref{eq:kExtremalConcurrenceProbabilityv3} yields
  \begin{equation*}
    p(o,h) = \E\left(\left[ V_h\{ \tilde Y(o), \tilde Y(h)\}
      \right]^{-1} \right) = \E\left(\left[ V_h\{ 1,(\rho+\sigma
        T)^\nu\} \right]^{-1} 1_{\{\rho+\sigma T>0\}}\right),
  \end{equation*}
  with \citep{Davison2012}
  \begin{equation*}
    V_h(z_1,z_2)=\frac{1}{z_1}T_{\nu+1}\left\{-\frac{\rho}{\sigma} +
      \frac{1}{\sigma} \left(\frac{z_2}{z_1}\right)^{1/\nu} \right\} +
    \frac{1}{z_2} T_{\nu+1}\left\{-\frac{\rho}{\sigma} +
      \frac{1}{\sigma} \left(\frac{z_1}{z_2}\right)^{1/\nu}\right\},
  \end{equation*}
  and straightforward simplifications give the announced result.
\end{proof}

\section{Statistical inference and asymptotic properties}
\label{sec:stat-infer}

\subsection{Sample concurrence probability estimators}

In this section we define a sample concurrence probability estimator
by blocking the data and study its basic properties as well as the
optimal choice of the block-size. We conclude with a methodological
improvement of the estimator based on permutation bootstrap.

Let $X_i = \{X_i(s_j)\colon j = 1, \ldots, k\}$, $i=1, \ldots,n$, be
random vectors in $\mathbb R^k$, $k \geq 2$. Partition the data into
non-overlapping blocks of size $m<n$, and define the \emph{sample
  concurrence probability estimator}
\begin{equation}
  \label{eq:pm}
  \hat p_m \equiv \hat p_m(X_1,\ldots,X_n) = \frac{1}{[n/m]} \sum_{r
    = 1}^{[n/m]} \max_{\ell=1,\ldots,m} 1_{\left\{ \max_{i=1, \ldots,
        m} X_{i+ (r-1)m} \leq X_{\ell+ (r-1)m} \right\}}.
\end{equation}
The max statistic above is simply an indicator of whether or not we
have concurrence in the $r$-th block, i.e., whether one of the vectors
dominates the componentwise maximum of the rest in the $r$-th block
of size $m$.

Assuming that $X_1, \ldots, X_n$ are independent and identically
distributed, the above estimator is the sample mean of $[n/m]$
independent ${\rm Bernoulli}(p_m)$ random variables, where $p_m$ is as
in~\eqref{eq:def-emp-conc-prob} with $n$ replaced by $m$. Therefore,
\begin{equation*}
  \E (\hat p_m) = p_m, \qquad \mbox{Var}(\hat p_m) =
  \frac{p_m(1-p_m)}{[n/m]},
\end{equation*}
i.e., $\hat p_m$ is a unbiased estimator for $p_m$.

As argued in the introduction, a major drawback of the sample
concurrence probability $p_m$ is that it depends on the sample size
$m$ and it is thus more sensible to focus on the limiting
\emph{extremal concurrence probability} $p = p(s_1,\ldots,s_k)$. The
sample concurrence probability estimator $\hat p_m$ is biased for $p$
with mean squared error
\begin{equation}
  \label{e:MSE-pm}
  \mbox{MSE}(\hat p_m) =(p_m-p)^2 + \frac{p_m(1-p_m)}{[n/m]}.
\end{equation}
Although the bias term $p_m-p$ is difficult to estimate in general,
for the max-stable case a precise expression is available.
\begin{prop}\label{prop:bias-estimate}
  Assume that $X$ is a max-stable process, then $p_m$ is
  non-increasing in $m$ and satisfies
\begin{equation*}
  0\leq p_m-p\leq \frac{(1-p)}{m},\qquad  m\geq 1.
\end{equation*}
Furthermore, if $p<1$, then there exists an integer
$r\in \{1,\ldots,k-1\}$ and a positive constant $c_r\in (0,1-p]$, such
that $(p_m - p) \sim c_r/m^r$ as $m \to \infty$.
\end{prop}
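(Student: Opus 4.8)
The plan is to prove a single exact formula for $p_m$ that displays its dependence on $m$ completely, and then read off all three claims. Since the hitting scenario---and hence concurrence---is invariant under strictly increasing marginal transformations (as used in the proof of Theorem~\ref{thm:conv}), I may take $X=\eta$ to be simple max-stable with unit Fréchet margins, represented as in~\eqref{eq:spectralCharacterizationv2} by $\eta=\bigvee_{\varphi\in\Phi}\varphi$ (pointwise maximum) for a Poisson point process $\Phi$ of intensity $\Lambda$. The engine of the proof is the identity
\begin{equation}
  p_m=\sum_{b=1}^{k}\Pr(|\pi|=b)\,m^{1-b},\tag{$\star$}
\end{equation}
where $\pi$ is the extremal hitting scenario of $\eta$ and $|\pi|$ its number of blocks.

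To establish ($\star$) I would realise the $m$ independent copies $\eta^{(1)},\dots,\eta^{(m)}$ by $m$ independent copies $\Phi^{(1)},\dots,\Phi^{(m)}$ of $\Phi$ and superpose them. The union $\Phi^\ast=\bigcup_{i=1}^m\Phi^{(i)}$ is a Poisson point process of intensity $m\Lambda$, and labelling each atom by the index $i$ of the copy it came from yields marks that are i.i.d.\ uniform on $\{1,\dots,m\}$ and independent of the atom positions. With $\eta^\ast=\bigvee_{\varphi\in\Phi^\ast}\varphi=\max_i\eta^{(i)}$, the copy $\eta^{(\ell)}$ attains the pointwise maximum at $s_j$ exactly when the atom realising $\eta^\ast(s_j)$ carries label $\ell$; thus sample concurrence is precisely the event that the $k$ maximising atoms share a common label. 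These maximising atoms are the extremal atoms of $\eta^\ast$, of which there are $|\pi^\ast|$ distinct ones ($\pi^\ast$ being the hitting scenario of $\eta^\ast$). Conditioning on the atom positions, the $b$ distinct extremal atoms receive i.i.d.\ uniform labels, so the probability that they coincide is $m\,(1/m)^{b}=m^{1-b}$; averaging gives $p_m=\mathbb{E}[m^{1-|\pi^\ast|}]$. Finally, since $\Lambda$ is homogeneous of order $-1$, multiplying the intensity by $m$ is the same as scaling the atoms, so $\Phi^\ast\stackrel{d}{=}\{m\varphi:\varphi\in\Phi\}$ and $\eta^\ast\stackrel{d}{=}m\eta$; as the argmax at each site is invariant under multiplication of all atoms by $m>0$, we get $\pi^\ast\stackrel{d}{=}\pi$, hence ($\star$).

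Granting ($\star$), the assertions follow by elementary manipulation. Each summand $m^{1-b}$ is non-increasing in $m$ (constant if $b=1$, strictly decreasing if $b\ge2$) and carries the non-negative weight $\Pr(|\pi|=b)$, so $p_m$ is non-increasing; letting $m\to\infty$ keeps only the $b=1$ term, giving $p=\Pr(|\pi|=1)$, in accordance with $p=\Pr(\pi=\{1,\dots,k\})$. Consequently $p_m-p=\sum_{b=2}^{k}\Pr(|\pi|=b)\,m^{1-b}\ge0$, and using $m^{1-b}\le m^{-1}$ for $b\ge2$ yields $p_m-p\le m^{-1}\sum_{b\ge2}\Pr(|\pi|=b)=(1-p)/m$. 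If moreover $p<1$, then $\sum_{b\ge2}\Pr(|\pi|=b)=1-p>0$, so $\{b\ge2:\Pr(|\pi|=b)>0\}$ is non-empty; writing its minimum as $r+1$ gives $r\in\{1,\dots,k-1\}$ and $p_m-p\sim c_r/m^{r}$ with $c_r=\Pr(|\pi|=r+1)$, which satisfies $0<c_r\le\sum_{b\ge2}\Pr(|\pi|=b)=1-p$.

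The delicate point, and the one I would treat most carefully, is the rigorous justification of ($\star$): that the maximising atoms of $\eta^\ast$ are determined by the atom positions alone, are almost surely finite in number and pairwise distinct, and that the independent uniform labels may legitimately be attached to them after conditioning on positions---this rests on the almost sure uniqueness of the pointwise maxima already assumed for $X$---together with the scale-invariance $\pi^\ast\stackrel{d}{=}\pi$ of the hitting scenario.
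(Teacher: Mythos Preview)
Your proposal is correct and follows essentially the same route as the paper. Both proofs superpose $m$ independent copies of the Poisson point process, observe that each atom's ``copy index'' is an independent uniform label on $\{1,\ldots,m\}$, and conclude that sample concurrence occurs exactly when the $|\pi^\ast|$ distinct extremal atoms share a common label---yielding the key identity $p_m=\sum_{b=1}^k\Pr(|\pi|=b)\,m^{1-b}$; the only cosmetic difference is that the paper rescales the superposition by $1/m$ at the outset (so that $\bar\Phi\stackrel{d}{=}\Phi$ directly) and splits the event into ``extremal concurrence holds/fails,'' whereas you keep the unscaled superposition and invoke the scale-invariance of the hitting scenario afterwards.
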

The proof expresses $(p_m-p)$ via the distribution of the extremal
hitting scenario and is postponed to
Appendix~\ref{sec:proof-bias-estimate}.

The above result suggests that an asymptotically optimal choice of the
block size can be made to minimize the rate of the mean squared error
in~\eqref{e:MSE-pm}. In view of Proposition~\ref{prop:bias-estimate},
for the general case $0<p<1$,
\begin{equation*}
  \mbox{MSE}(\hat p_m) =\left(\frac{c_r}{m^r}\right)^2 +
  \frac{p(1-p)m}{n} + o(m^{-2r}) + O(m/n)
\end{equation*}
Taking the derivative with respect to $m$ we see that the optimal rate
corresponds to $2r c_r^2m^{-2r-1}\sim p(1-p)n^{-1}$, as $n\to\infty$.
Hence the block size that asymptotically minimizes the mean squared
error is
\begin{equation}
  \label{e:MSE-opt-m(n)}
  m_{\text{MSE}}(n) \sim
  \left\{\frac{2rc_r^{2}n}{p(1-p)}\right\}^{1/(2r+1)}, \qquad n\to\infty.
\end{equation}
For this rate-optimal mean squared error we obtain
$\text{MSE}(\hat p_m) \propto n^{- 2r/(2r+1)}$.

\begin{rem}
  The constants $r$, $c_r$ and $p$ in~\eqref{e:MSE-opt-m(n)} are
  unknown. The precise expressions for $r$ and $c_r$ involve multiple
  concurrence probabilities, i.e., when two or more events contribute
  to the maximum. In principle, pilot estimates of these parameters
  could be obtained and used as plug-ins in~\eqref{e:MSE-opt-m(n)}.
  Since the cases $r\geq 2$ correspond to very specific dependence
  structures, in practice, we recommend using the conservative choice
  $r=1$ and $c_r = 1$.
\end{rem}

The following result establishes the asymptotic behavior of the sample
concurrence probability estimator. The proof is given in
Appendix~\ref{sec:proofs-CLT}.

\begin{thm}\label{thm:CLT-1}
  Suppose that $0<p<1$ and let $m=m(n)$ be such that
  $n/m(n) \to \infty$ and
  $m(n)/n^{1/(2r+1)} \to \lambda \in (0,\infty]$ as $n\to\infty$.
  Then
  \begin{equation*}
  \sqrt{n/m}(\hat p_m - p) \longrightarrow
  N\left\{\frac{c_r}{\lambda^{r+1/2}}, p (1-p)\right\}, \qquad n \to
    \infty
\end{equation*}
where $1 / \infty$ is interpreted as zero.
\end{thm}

\begin{rem}
  In Theorem~\ref{thm:CLT-1}, we encounter a typical tradeoff between
  rate-optimality and bias.  In particular, the mean squared error
  optimal choice of $m$ as in~\eqref{e:MSE-opt-m(n)} corresponding to
  $\lambda = (2rc_r)^{2/(2r+1)}\{p(1-p)\}^{-1/(2r+1)}$ yields the
  limit distribution
  \begin{equation*}
    N\left\{\frac{\sqrt{p(1-p)}}{2r},p(1-p)\right\},
  \end{equation*}
  which has non-zero mean. On the other hand, the rate sub-optimal
  choices where $\lambda=\infty$ yield Normal unbiased limits.
\end{rem}

In the case $p=0$, we have no optimal block size or optimal rate
estimation. Still, the following asymptotic result can be useful.
\begin{thm}
\label{thm:CLT-2}
  Suppose that $p=0$ and let $m=m(n)$ be such that $n/m(n) \to \infty$
  and $m(n)/n^{1/(r+1)} \to \lambda \in (0,\infty]$ as $n\to\infty$.
  If $\lambda <\infty$, then
  \begin{equation*}
    (n/m) \hat p_m  \longrightarrow {\rm Poisson}(c_r/\lambda^{r+1}),
    \qquad n \to \infty.
  \end{equation*}
  Otherwise, if $\lambda = \infty$, then $\Pr(\hat p_m = 0) \to 1$ as
  $n\to\infty$ and hence $a_n \hat p_m \to 0$ in probability as
  $n\to\infty$ for any sequence $a_n>0$.
\end{thm}

\begin{rem}
  When $k = 2$ it is possible to get an unbiased estimator for $p$
  based on a slight modification of $\hat{p}_m$. Indeed for this
  specific case,~\eqref{eq:bias-estimate} implies that
  $p = (m p_m - 1) / (m - 1)$ and hence the estimator
  \begin{equation}
    \label{eq:unbiased-scp}
    \tilde{p}_m = \frac{m \hat{p}_m - 1}{m - 1}
  \end{equation}
  is unbiased for $p$ as $\hat{p}_m$ is unbiased for $p_m$.
\end{rem}

We conclude this subsection with a brief methodological improvement of
the sample concurrence probability estimator $\hat p_m$ based on
permutation bootstrap. The idea is to compute the estimator $\hat p_m$
for several independent random permutations of the sample
$X_1, \ldots, X_n$. Then the average of the resulting estimator would
have a lower variance and the same mean $p_m$.

Formally, this procedure is justified by the following simple
observation based on Rao--Blackwellization.  Consider the
\emph{lexicographic} linear order in $\mathbb R^k$, denoted $\prec$,
and let $X_{(1)} \prec X_{(2)} \prec \cdots \prec X_{(n)}$ be the
sorted sample obtained from $X_1, \ldots, X_n$. The independence of
the $X_i$'s and the continuity of their marginals entails that the
above ordering is strict with probability one.

Let $T\{X_1,\ldots,X_n\}= (X_{(1)},\ldots,X_{(n)})$. It can be shown
that $T$ is a \emph{sufficient statistic} for the parameter
$p_m = p_m(s_1, \ldots, s_k)$ and the Rao--Blackwell theorem implies
the following propostion.

\begin{prop} For $\hat p_m^* = \E (\hat p_m \mid T)$ we have  $\E
  \left(\hat p_m^* \right)= p_m$ and
\begin{equation}
  \label{e:Rao-Blackwell}
  \E\left\{ \left(\hat p_m^* - p_m \right)^2 \right\} \leq \E\left\{
    \left( \hat p_m - p_m\right)^2 \right\}.
\end{equation}
Moreover we have
\begin{equation}
  \label{e:pm-star}
  \hat p_m^* = \frac{1}{n!} \sum_{\sigma \in S_n} \hat p_m \{
  X_{\sigma(1)}, \ldots, X_{\sigma(n)}\},
\end{equation}
and where $S_n$ denotes the set of all permutations of
$\{1,\ldots,n\}$.

An alternative expression for $\hat{p}_m^*$ is
\begin{equation}
  \label{e:pm-star2}
 \hat p_m^*=\frac{1}{{n\choose m}}\sum_{i=1}^n {{d_{i}} \choose {m-1}}
\end{equation}
where $d_i=\sum_{k=1}^n 1_{\{ X_k<X_i\}}$ and
${{d_{i}} \choose {m-1}}=0$ if $d_i<m-1$.
\end{prop}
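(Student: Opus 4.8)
The plan is to establish the three claims — the unbiasedness and variance-reduction inequality, the symmetrized average formula, and the combinatorial closed form — in that order, treating the Rao--Blackwell machinery first and then reducing the abstract conditional expectation to an explicit count.

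First I would verify that $T$ is sufficient for $p_m$. The key observation is that, because concurrence is invariant under strictly increasing transformations of each margin (as noted already after~\eqref{eq:def-emp-conc-prob}), the statistic $\hat p_m$ depends on the sample only through the relative order of the vectors, not their actual values; more precisely, conditionally on the ordered sample $T\{X_1,\ldots,X_n\}=(X_{(1)},\ldots,X_{(n)})$, the unordered multiset is recovered and the conditional law of $(X_1,\ldots,X_n)$ given $T$ is the uniform distribution over the $n!$ permutations of $(X_{(1)},\ldots,X_{(n)})$, independently of the underlying distribution of $X$. This exchangeability is exactly sufficiency, and it immediately yields~\eqref{e:pm-star}: since $\hat p_m^*=\E(\hat p_m\mid T)$ averages $\hat p_m$ over the uniform conditional law, it equals the average of $\hat p_m\{X_{\sigma(1)},\ldots,X_{\sigma(n)}\}$ over all $\sigma\in S_n$. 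The unbiasedness $\E(\hat p_m^*)=\E(\hat p_m)=p_m$ and the inequality~\eqref{e:Rao-Blackwell} are then the standard Rao--Blackwell conclusions (the tower property gives equality of means, and the conditional Jensen inequality, equivalently the decomposition of variance, gives the MSE reduction).

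The combinatorial formula~\eqref{e:pm-star2} is where the real work lies, and I expect it to be the main obstacle. The idea is to compute $\hat p_m^*$ directly as a conditional probability rather than averaging the block estimator. By construction $\hat p_m^*=\E(\hat p_m\mid T)$ is the conditional expected fraction of blocks exhibiting concurrence; by exchangeability each of the $[n/m]$ blocks contributes the same conditional probability, so $\hat p_m^*$ equals the conditional probability that a single block of $m$ vectors drawn without replacement from the fixed sample $\{X_{(1)},\ldots,X_{(n)}\}$ is concurrent. A block of size $m$ is concurrent precisely when one of its $m$ members dominates, componentwise, all the other $m-1$ members of the block. The plan is therefore to count, over all $\binom{n}{m}$ equally likely blocks, how many contain a dominating element. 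Fixing the dominating vector to be $X_i$, a block is counted by $X_i$ iff the other $m-1$ members are all chosen from the set $\{X_k : X_k<X_i\}$ (with $<$ the componentwise order), which has cardinality $d_i=\sum_{k=1}^n 1_{\{X_k<X_i\}}$; there are $\binom{d_i}{m-1}$ such choices (zero when $d_i<m-1$). Summing over $i$ and dividing by $\binom{n}{m}$ gives~\eqref{e:pm-star2}.

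The delicate point — and the step I would scrutinize most carefully — is showing this count is free of double-counting, i.e.\ that the $n$ events ``$X_i$ dominates its block'' are disjoint across distinct choices of the dominator $i$. This holds because within any block there can be at most one element that dominates all the others: two distinct dominators would each have to exceed the other componentwise, which is impossible. Hence every concurrent block is counted exactly once, for exactly one value of $i$, and the sum $\sum_{i=1}^n\binom{d_i}{m-1}$ is precisely the number of concurrent blocks. I would also note that ties occur with probability zero by continuity of the margins, so the strict componentwise order and the uniqueness of the dominator are well defined almost surely, completing the derivation of~\eqref{e:pm-star2}.
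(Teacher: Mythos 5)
Your proposal is correct and follows essentially the same route as the paper: sufficiency via the permutation-uniform conditional law, Rao--Blackwell/conditional Jensen for unbiasedness and the MSE inequality, and the reduction of $\hat p_m^*$ to a count of concurrent $m$-subsets, with $\binom{d_i}{m-1}$ subsets dominated by $X_i$. Your explicit verification that a block can have at most one dominator (so no double-counting occurs) is a point the paper leaves implicit, but it is the same argument.
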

\begin{proof}
  Let $f(x; p_m)$, $x\in\mathbb R^k$, be the density of
  $X = \{X(s_1),\ldots,X(s_k)\}$ with respect to some dominating
  measure $\lambda$. The fact that for the likelihood, we have
  \begin{equation*}
    L(p_m; X_i, i=1,\ldots,n) = \prod_{i=1}^n f(X_{i}; p_m) = \prod_{i=1}^n f(X_{(i)}; p_m),
  \end{equation*}
  shows that $T = \{X_{(1)},\ldots,X_{(n)}\}$ is a \emph{sufficient
    statistic} for $p_m$. The inequality in~\eqref{e:Rao-Blackwell}
  follows by appealing to the Rao--Blackwell Theorem or simply
  applying the conditional form of Jensen's inequality.

  The independence of the $X_i$'s and the lack of ties (with
  probability one) in the lexicographic order imply
  \begin{equation*}
    \Pr \{ X_{\sigma(i)} = X_{(i)},\ i=1,\ldots,n \} = \frac{1}{n!},
  \end{equation*}
  for all $\sigma \in S_n$. This shows that
  $\hat p_n^* =\E (\hat p_m \mid T)$ is expressed as in~\eqref{e:pm-star}.

  By the definition of the sample concurrence probability estimator,
  we get
  \begin{align*}
    \hat p_m^* &= \frac{1}{n!}\sum_{\sigma\in S_n} \frac{1}{[n/m]}
                 \sum_{r= 1}^{[n/m]} 1_{\left\{\text{sample concurrence
                 occurs within $(X_{\sigma(i+ (r-1)m)}\colon i = 1,
                 \ldots, m)$} \right\}}\\
               &= \frac{1}{{n \choose m}} \sum_{S\in
                 \mathcal{P}_m(n)}1_{\left\{\text{sample concurrence occurs
                 within $(X_i \colon i\in S)$} \right\}}
  \end{align*}
  where $\mathcal{P}_m(n)$ is the collection of all subsets
  $S\subset \{1,\ldots,n\}$ of $m$ elements.  Given a subset
  $S\in\mathcal{P}_m(n)$ and $i_0\in S$, it is easy to see that
  $X_{i_0}$ dominates $(X_{i})_{i\in S}$ if and only if
  $S\setminus \{i_0\}$ is included in the set
  $\{k=1,\ldots,n;X_k<X_{i_0}\}$. We deduce that for a given index
  $i_0$, the number of subset $S\in\mathcal{P}_m(n)$ such that
  $X_{i_0}$ dominates $(X_{i})_{i\in S}$ is equal to
  ${{d_{i_0}}\choose{m-1}}$. Formula~\eqref{e:pm-star2} follows
  easily.
\end{proof}

The above result shows that the estimator $\hat p_m^*$ is superior to
$\hat p_m$ in terms of mean squared error. From a numerical point of
view, formula~\eqref{e:pm-star2} is much more computationally
efficient than~\eqref{e:pm-star}. We shall refer to $\hat p_m^*$ as to
the sample concurrence probability bootstrap estimator. It is
significantly better, in practice, than the simple sample concurrence
probability estimator $\hat p_m$ and therefore in applications we
recommend using only $\hat{p}_m^*$.  As indicated above, in the case
of bivariate concurrence, the bias of $\hat p_m$ can be removed. As in
Relation~\eqref{eq:unbiased-scp}, we obtain the following unbiased
modification of $\hat{p}_m^*$ for the case of pairwise concurrence
\begin{equation}\label{eq:p-tilde_m}
  \tilde p_m^* = \frac{m \hat{p}_m^* - 1}{m - 1}.
\end{equation}
The respective performances of $\hat{p}_m$, $\hat{p}_m^*$ and
$\tilde{p}_m^*$ are analyzed in Section~\ref{sec:simul-study-appl}.

\subsection{Extremal concurrence probability estimators}
\label{subsec:extremal-concurrence-probability-estimators}

Although the sample concurrence probability can be easily estimated,
deriving an estimator for the extremal concurrence probability seems
at first sight difficult since
in~\eqref{eq:kExtremalConcurrenceProbabilityv1} the stochastic
processes $Y$ and $\tilde Y$ are not observable. Recall that for
statistical purposes we often assume that the pointwise block maxima,
e.g.\ pointwise annual maxima, are distributed according to some
max-stable process and thus we observe $\eta$ but not
$Y$. Fortunately, Theorem~\ref{thm:thm2} enables us to estimate
$p(s_1,\ldots,s_k)$ without the need of observing
$Y$.

Based on $\eta_1, \ldots, \eta_n$ independent copies of $\eta$, one
possible estimator for $p(s_1, \ldots, s_k)$ is to consider the sample
counterpart of~\eqref{eq:kExtremalConcurrenceProbability}, i.e.,
\begin{equation}
  \label{eq:logBasedEstimator}
  \hat p(s_1, \ldots, s_k) = \sum_{r=1}^k (-1)^r \sum_{%
    \begin{smallmatrix}
      J \subseteq \{1, \ldots, k\}\\
      |J| = r
    \end{smallmatrix}}
  \frac{1}{n} \sum_{i=1}^n \log \left[
    \frac{1}{n} \sum_{k=1}^n 1_{\{\eta_k(s_j) \leq \eta_i(s_j),\ j \in
      J\}} \right].
\end{equation}

In the innermost summation, we include the index $k=i$ to ensure that
the logarithm is always well defined. Although
estimator~\eqref{eq:logBasedEstimator} seems natural it has
undesirable properties since it is not linear in the data and is thus
likely to show some bias for small sample sizes. Also the study of its
asymptotic properties seems delicate. However, to reduce the bias of
this estimator, it is always possible to use a Jackknife
procedure.

Fortunately, in the bivariate case, it is possible to get an unbiased
estimator based on Theorem~\ref{thm:p-tau}. This theorem suggests the
simple estimator $\hat p \equiv \hat p(s_1, s_2) = \hat \tau$, where
$\hat \tau$ is the Kendall's $\tau$ statistic, i.e.,
\begin{equation}
  \label{eq:kendall-tau}
 \hat p \equiv \hat \tau = \frac{2}{n (n-1)} \sum_{1 \leq i <
    j \leq n} \mbox{sign} \{\eta_i(s_1) - \eta_j(s_1) \} \mbox{sign}
  \{\eta_i(s_2) - \eta_j(s_2) \},
\end{equation}
that is well known to be unbiased and such that \citep[Theorem
4.3]{dengler:2010}
\begin{equation*}
  \sqrt{n}(\hat \tau - \tau) \longrightarrow  N(0,\sigma_\tau^2),
  \qquad \sigma_\tau^2 = 15 \mbox{Var}[ F\{\eta(s_1), \eta(s_2)\} -
  F\{\eta(s_1)\} - F\{\eta(s_2)\} ].
\end{equation*}
Although the asymptotic variance $\sigma^2_\tau$ is hard to evaluate
as it requires knowledge of the dependence structure, in practice it
can be accurately and consistently estimated using Jacknife
\citep{schemper:1987}.

\subsection{Integrated concurrence probabilities and area of concurrence cell}

\label{sec:concurrence-cell}
As we will see in Section~\ref{sec:application}, the above methodology
can be used to provide bivariate concurrence probability maps
$s\mapsto p(s_0,s)$ centered at a given location $s_0$. Such maps show
how fast the dependence in extremes decreases when moving away from
$s_0$. A drawback of this approach is that one may produce one such
map for every choice of an origin $s_0$ and the choice of an origin is
hence quite arbitrary. To bypass this issue, we propose to consider
the integrated concurrence probability
\begin{equation*}
I(s_0)=\int_{s\in\mathcal{X}}p(s_0,s)\mbox{d$s$},\quad s_0\in \mathcal{X}.
\end{equation*}

Intuitively, this quantity measures how fast the dependence in
extremes decreases when moving away from $s_0$. Interestingly, it can
be related to the notion of concurrence cell and its area. Consider
the Poisson process representation of the max-stable random field
$\eta = \{\eta(s)\colon s\in {\cal X}\}$
in~\eqref{eq:spectralCharacterizationv2}.  Recall that we have a
concurrence of extremes at sites $s_0$ and $s$ if
$\eta(s_0) = \phi(s_0)$ and $\eta(s) = \phi(s)$, for the same
$\phi \in \Phi$.  Let $C(s_0)$ denotes the \emph{random} set of all
sites $s$ that are in a concurrence relation with $s_0$.  This set
will be referred to as the \emph{concurrence cell} containing the site
$s_0$.
\begin{prop}\label{prop:ICP} For any site $s_0\in {\cal X}$, we have
  $I(s_0)=\E\{|C(s_0)|\}$ where $|C(s_0)|$ is the $d$-dimensional
  volume of $C(s_0)$.
\end{prop}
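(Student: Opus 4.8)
The plan is to recognize that the integrated concurrence probability $I(s_0)$ is nothing other than the expected volume of the concurrence cell $C(s_0)$, the identity being obtained by interchanging expectation and Lebesgue integration through Tonelli's theorem. The whole argument is essentially a Fubini--Tonelli computation once the correct probabilistic interpretation of $p(s_0,s)$ is in place.

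First I would observe that, directly from the definition of the bivariate extremal concurrence probability in~\eqref{eq:def-ext-conc-prob} (taken with $s_1=s_0$, $s_2=s$) and from the definition of the concurrence cell, the statement ``$s\in C(s_0)$'' is exactly the statement that extremes are concurrent at $s_0$ and $s$, i.e.\ that there is some $\varphi\in\Phi$ with $\varphi(s_0)=\eta(s_0)$ and $\varphi(s)=\eta(s)$. Hence
\[
  p(s_0,s)=\Pr\{s\in C(s_0)\},\qquad s\in\mathcal{X}.
\]
Next I would write the volume of the cell as the Lebesgue integral of its indicator and take expectations, using Tonelli's theorem (legitimate since the integrand is nonnegative) to swap $\E$ and $\int$:
\[
  \E\{|C(s_0)|\}=\E\left\{\int_{\mathcal{X}}1_{\{s\in C(s_0)\}}\,\mathrm{d}s\right\}
  =\int_{\mathcal{X}}\Pr\{s\in C(s_0)\}\,\mathrm{d}s
  =\int_{\mathcal{X}}p(s_0,s)\,\mathrm{d}s=I(s_0).
\]

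The one step requiring genuine care, and the main obstacle, is the joint measurability of the map $(\omega,s)\mapsto 1_{\{s\in C(s_0)\}}$ on $\Omega\times\mathcal{X}$, which Tonelli's theorem presupposes (and which, as a byproduct, guarantees that $\omega\mapsto|C(s_0)|$ is a bona fide random variable). I would establish it using the continuity of the sample paths of the spectral functions $\varphi_i=\zeta_i Y_i$ and of $\eta$: almost surely the maximum defining $\eta(s_0)$ is attained at a unique point $\varphi_{\ell_0}\in\Phi$, so that $s\in C(s_0)$ if and only if $\varphi_{\ell_0}(s)=\eta(s)$, and this membership is jointly measurable in $(\omega,s)$ thanks to the continuity in $s$ of $\varphi_{\ell_0}$ and $\eta$ together with the measurability of the hitting functional already exploited in the proof of Theorem~\ref{thm:conv} (see Appendix~\ref{sec:proof-continuity-theta}). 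Once this measurability is secured, the Tonelli interchange displayed above is justified and the proof is complete.
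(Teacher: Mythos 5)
Your proof is correct and follows essentially the same route as the paper: identify $p(s_0,s)$ with $\E\{1_{C(s_0)}(s)\}$, write $|C(s_0)|=\int_{\mathcal{X}}1_{C(s_0)}(s)\,\mathrm{d}s$, and swap expectation and integration via Tonelli--Fubini. Your additional discussion of joint measurability is a welcome bit of rigor that the paper leaves implicit, but it does not change the argument.
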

\begin{proof} Observe that the concurrence probability satisfies
  $p(s_0,s)=\E\left\{ 1_{C(s_0)}(s) \right\}$ and that the volume of
  the concurrence cell is given by
  \begin{equation*}
    |C(s_0)| = \int_\mathcal{X} 1_{C(s_0)}(s) \mbox{d$s$}.
  \end{equation*}
The result follows by applying the Tonelli--Fubini's theorem.
\end{proof}
We will provide and discuss in Section~\ref{sec:application} some maps
of the integrated concurrence probability $s_0\mapsto I(s_0)$ that
allow to evaluate at each location $s_0$ the dependence in extremes
around $s_0$.  For a detailed study of the properties of the
concurrence cells associated to a max-stable random field and of the
tessellation of the entire domain generated by the concurrence cells,
please refer to the recent work of~\cite{dombry:kabluchko:2014}.

\section{Simulation study}
\label{sec:simul-study-appl}

In this section, we analyze the performance of the pairwise sample
concurrence probability estimators $\hat p_m$, $\hat p_m^*$ and
$\tilde p_m^*$ defined in~\eqref{eq:pm},~\eqref{e:pm-star2}
and~\eqref{eq:p-tilde_m} respectively, and that of their extremal
counterpart $\hat{p}$ in~\eqref{eq:kendall-tau}. Since the latter
estimator relies on the max-stability assumption while the former
three assume that observations belong to the max-domain of attraction,
we need to handle both situations.  The first one is well known and
consists in sampling from max-stable processes using the methodology
of \citet{Schlather2002}. In the second situation, to be able to
control the degree to which the model differs from a max-stable one,
we consider the following partial maxima
\begin{equation}
  \label{eq:truncated-spectral-charac}
  \tilde{\eta}(s) = \frac{1}{n_0} \max_{i=1, \ldots, n_0} U_i^{-1}
  Y_i(s), \qquad s \in \mathcal{X},
\end{equation}
where $Y_i$ are as in~\eqref{eq:spectralCharacterization},
$U_1, \ldots, U_{n_0}$ independent $U(0,1)$ random variables and for
some suitable $n_0 \in \mathbb{N}$. By construction, $\tilde{\eta}$
belongs to the max-domain of attraction of $\eta$
in~\eqref{eq:spectralCharacterization} and in some sense can be viewed
as a \emph{truncation} of the spectral representation
in~\eqref{eq:spectralCharacterization} (see, e.g.,\ the proof of
Proposition 3.1 in~\cite{stoev:taqqu:2005}.)

\begin{figure}
  \centering
  \includegraphics[width=\textwidth]{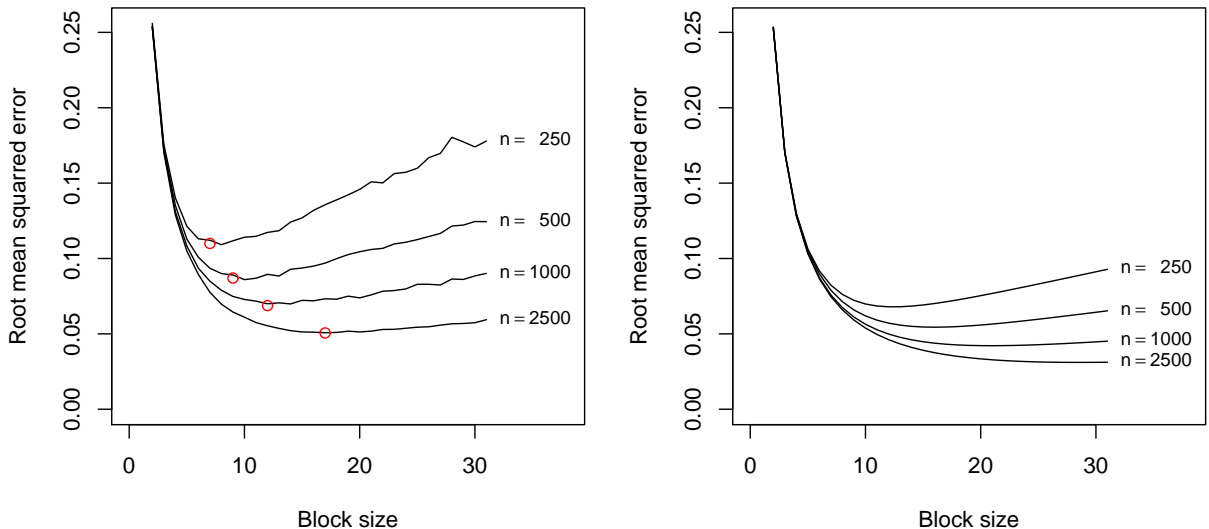}
  \caption{Evolution of the root mean squared error for $\hat{p}_m$
    (left) and $\hat{p}_m^*$ (right) as the block size $m$ and the
    sample size $n$ increase. These estimates were obtained from 2000
    Monte-Carlo samples sampled from a Brown--Resnick model with
    semivariogram $\gamma(h) = h / 1.627$. This semivariogram was
    chosen such that the theoretical extremal concurrence probability
    is $p = 0.5$. The red circles indicate the optimal block sizes as
    defined by~\eqref{e:MSE-opt-m(n)} and their corresponding optimal
    root mean squared error~\eqref{e:MSE-pm}.}
\label{fig:perfSCP}
\end{figure}

We first focus only on the sample concurrence probability estimators,
i.e., $\hat{p}_m$ and $\hat{p}_m^*$, and analyze their performance
with respect to the block size $m$ and the sample size $n$. Based on a
Monte-Carlo simulation, Figure~\ref{fig:perfSCP} shows the evolution
of the root mean squared error as the block size grows. As expected,
both estimators become increasingly more efficient as the sample size
grows and, as seen from~\eqref{e:Rao-Blackwell}, the permutation
estimator $\hat{p}_m^*$ is more efficient than
$\hat{p}_m$---independently of the block size $m$ and the sample size
$n$.  The circles on the plot indicate the asymptotically optimal
block size in~\eqref{e:MSE-opt-m(n)}, which are valid olnly for
max-stable data. As expected the observed optimal block sizes are in
good agreement with the theoretical ones. In practice, however, since
the data are not exactly max-stable, we recommend using slightly
larger values of $m$ so as to ensure that the block-maxima are closer
to a max-stable model but also to take into account that data usually
exhibit serial dependence, e.g., daily
observations.

\begin{figure}
  \centering
  \includegraphics[width=\textwidth]{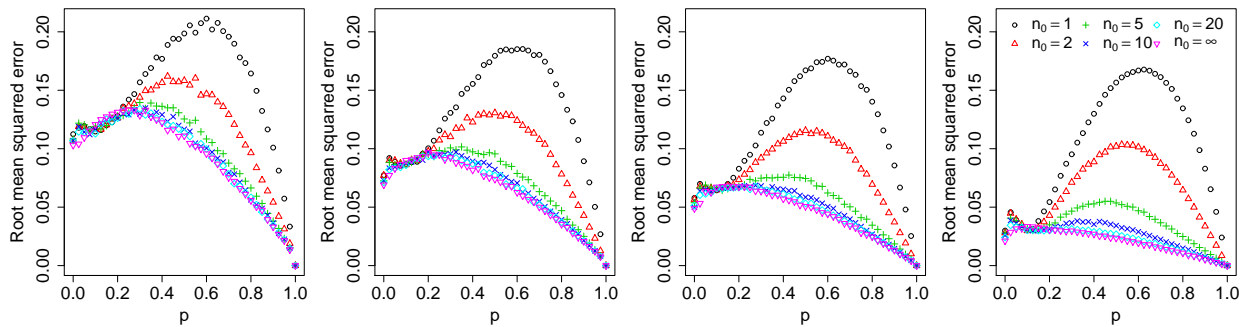}
  \caption{Evolution of the root mean squared error for $\hat{p}$ as
    the theoretical extremal concurrence probability $p$ and the
    number of spectral function $n_0$
    in~\eqref{eq:truncated-spectral-charac} increase. These estimates
    were obtained from 2000 Monte-Carlo samples of size $n$ with,
    from left to right, $n = 25, 50, 100, 500$.}
\label{fig:perfECP}
\end{figure}

We now investigate the performance of the extremal concurrence
estimator $\hat{p}$. Figure~\ref{fig:perfECP} shows the evolution of
the root mean squared error as the number of spectral functions $n_0$
in~\eqref{eq:truncated-spectral-charac} and the theoretical extremal
concurrence probability increase. As expected, as the sample size $n$
grows, the estimator $\hat{p}$ becomes much more efficient. Interestingly, for
small sample sizes, $\hat{p}$ appears to be fairly robust to the lack
of max-stability in the data, i.e., $n_0 < \infty$. This is not true
anymore for larger sample sizes since, as expected, $\hat{p}$ becomes
increasingly more efficient as the number of spectral functions
increases.

Finally, we compare the performance of the sample concurrence 
probability estimators $\hat{p}_m^*$ and $\tilde p_m^*$ with their
extremal concurrence counterpart $\hat{p}$. To compare the two types 
of estimators on a fair basis, we analyze their behaviour when the simulated 
data are either perfectly max-stable or non max-stable, but in the domain of 
attraction of a max-stable distribution.
\begin{figure}
  \centering
  \includegraphics[width=\textwidth]{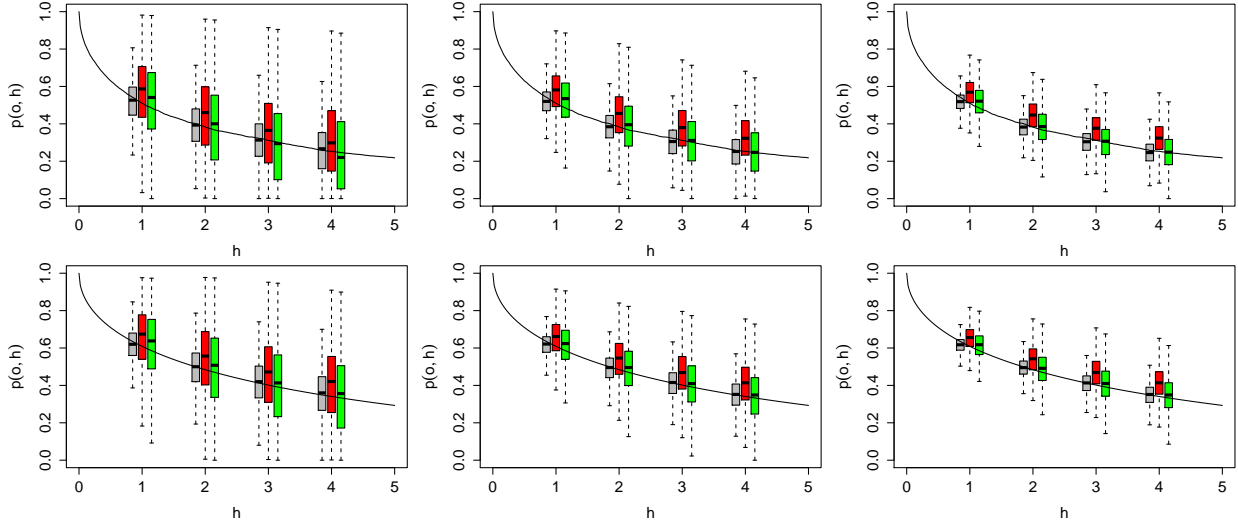}
  \caption{Boxplots of the sample (red / middle), unbiased sample
    (green / right) and extremal (grey / left) concurrence probability
    estimators at distance lags $h=1,2,3,4$. The boxplots were
    obtained from 2000 independent estimates. From left to right: the
    sample size is respectively 25, 50, 100 and 500. The top panel
    corresponds to an extremal-$t$ model with $\nu = 5$, and
    correlation function $\rho(h) = \exp(-h/10)$. The bottom panel
    corresponds to a Brown--Resnick model with semi variogram
    $\gamma(h) = h / 3$. For each panel, the solid line represents the
    corresponding theoretical extremal concurrence probability
    function.}
\label{fig:comparisonECPandSCP}
\end{figure}

Figure~\ref{fig:comparisonECPandSCP} shows boxplots of the sample
$\hat p_m^*$, unbiased sample $\tilde p_m^*$ and extremal concurrence
probability estimators $\hat p$, based on 2000 Monte-Carlo
realizations of both a Brown--Resnick and extremal-$t$ models.  Recall
that we focus here on pairwise concurrence probabilities.  In this
case, the extremal concurrence probability coincides with Kendall's
$\tau$ and therefore, the estimator $\hat p$ in~\eqref{eq:kendall-tau}
is in fact unbiased for the case of max-stable data. This is
confirmed by the results in Figure~\ref{fig:comparisonECPandSCP}.  As
expected, the variability of all estimators decreases as the sample
size grows; the extremal concurrence probability estimator being the
most precise one. Since the simulated data are max-stable, we can see
that the sample concurrence probability estimator is biased even when
the sample size is large while the remaining two estimators are, as
expected, unbiased. Overall the extremal concurrence probability
appears to be the best estimator provided that the data are
max-stable.

\begin{table}
  \centering
  \caption{Performance of the sample $(\hat{p}^*_m)$, unbiased sample
    $(\tilde{p}^*_m)$ and extremal $(\hat{p})$ concurrence probability
    estimators. The table report the sample mean and the standard
    deviation in paren based on 2000 Monte-Carlo replicates. The data
    are either simulated from an extremal-$t$ model with correlation
    function $\rho(h) = \exp(-h / 10)$ and $\nu = 5$ degrees of
    freedom or from its truncated representation with $n_0$ extremal
    functions. Throughout this simulation study the block size is held
    fixed to $m = 10$, independently of the sample size $n$.}
\label{tab:comparisonECPandSCP}
  {\scriptsize
  \begin{tabular}{lccccccccccc}
    \hline
    & \multicolumn{3}{c}{$p = 0.25$} && \multicolumn{3}{c}{$p = 0.50$} && \multicolumn{3}{c}{$p = 0.75$}\\
    \cline{2-4} \cline{6-8} \cline{10-12}
    & $\hat{p}^*_m$ & $\tilde{p}^*_m$ & $\hat{p}$ && $\hat{p}^*_m$ & $\tilde{p}^*_m$ & $\hat{p}$ && $\hat{p}^*_m$ & $\tilde{p}^*_m$ & $\hat{p}$\\
    \hline
    \multicolumn{12}{c}{Sample size $n = 20$}\\
         $n_0 = \phantom{1}1$ & $0.41~(0.24)$ & $0.35~(0.26)$ & $0.47~(0.13)$ && $0.64~(0.22)$ & $0.60~(0.24)$ & $0.71~(0.09)$ && $0.83~(0.15)$ & $0.81~(0.17)$ & $0.87~(0.05)$ \\ 
        $n_0 = 10$ & $0.34~(0.24)$ & $0.27~(0.25)$ & $0.31~(0.14)$ && $0.57~(0.23)$ & $0.52~(0.26)$ & $0.58~(0.12)$ && $0.79~(0.17)$ & $0.77~(0.19)$ & $0.80~(0.07)$ \\ 
        $n_0 = 15$ & $0.33~(0.24)$ & $0.27~(0.25)$ & $0.30~(0.15)$ && $0.56~(0.23)$ & $0.52~(0.26)$ & $0.56~(0.12)$ && $0.78~(0.17)$ & $0.76~(0.19)$ & $0.78~(0.07)$ \\ 
    $n_0 = \infty$ & $0.33~(0.24)$ & $0.27~(0.25)$ & $0.25~(0.15)$ && $0.55~(0.24)$ & $0.50~(0.26)$ & $0.50~(0.13)$ && $0.77~(0.18)$ & $0.75~(0.20)$ & $0.75~(0.08)$ \\ 
    \multicolumn{12}{c}{Sample size $n = 50$}\\
         $n_0 = \phantom{1}1$ & $0.41~(0.13)$ & $0.35~(0.14)$ & $0.47~(0.08)$ && $0.65~(0.10)$ & $0.61~(0.12)$ & $0.71~(0.05)$ && $0.84~(0.07)$ & $0.82~(0.07)$ & $0.87~(0.03)$\\ 
        $n_0 = 10$ & $0.34~(0.13)$ & $0.26~(0.14)$ & $0.31~(0.09)$ && $0.57~(0.12)$ & $0.52~(0.13)$ & $0.57~(0.07)$ && $0.79~(0.08)$ & $0.76~(0.09)$ & $0.80~(0.04)$ \\ 
        $n_0 = 15$ & $0.33~(0.13)$ & $0.25~(0.14)$ & $0.29~(0.09)$ && $0.56~(0.12)$ & $0.51~(0.13)$ & $0.56~(0.07)$ && $0.78~(0.08)$ & $0.76~(0.09)$ & $0.79~(0.04)$ \\ 
    $n_0 = \infty$ & $0.32~(0.13)$ & $0.25~(0.14)$ & $0.24~(0.09)$ && $0.54~(0.12)$ & $0.49~(0.14)$ & $0.50~(0.08)$ && $0.77~(0.09)$ & $0.74~(0.09)$ & $0.75~(0.05)$ \\ 
    \multicolumn{12}{c}{Sample size $n = 100$}\\
         $n_0 = \phantom{1}1$ & $0.41~(0.08)$ & $0.35~(0.09)$ & $0.46~(0.06)$ && $0.65~(0.07)$ & $0.61~(0.07)$ & $0.71~(0.03)$ && $0.83~(0.04)$ & $0.82~(0.04)$ & $0.87~(0.02)$\\ 
        $n_0 = 10$ & $0.34~(0.09)$ & $0.26~(0.10)$ & $0.31~(0.06)$ && $0.57~(0.08)$ & $0.52~(0.09)$ & $0.57~(0.05)$ && $0.78~(0.05)$ & $0.76~(0.05)$ & $0.80~(0.03)$ \\ 
        $n_0 = 15$ & $0.33~(0.09)$ & $0.26~(0.10)$ & $0.29~(0.06)$ && $0.56~(0.08)$ & $0.51~(0.09)$ & $0.55~(0.05)$ && $0.78~(0.05)$ & $0.76~(0.06)$ & $0.78~(0.03)$ \\ 
    $n_0 = \infty$ & $0.33~(0.09)$ & $0.25~(0.10)$ & $0.25~(0.07)$ && $0.55~(0.08)$ & $0.50~(0.09)$ & $0.50~(0.05)$ && $0.78~(0.05)$ & $0.75~(0.06)$ & $0.75~(0.03)$ \\    
    \hline
  \end{tabular}
}
\end{table}

To corroborate this finding, Table~\ref{tab:comparisonECPandSCP}
reports Monte-Carlo sample means and standard deviations of these
estimators as the assumption of max-stability becomes more accurate,
i.e., as the number $n_0$ of spectral functions
in~\eqref{eq:truncated-spectral-charac} grows. As expected, when the
max-stability assumption is most unreasonable, i.e., $n_0 = 1$, all
estimators show a substantial bias with the extremal concurrence
probability estimator $\hat p$ having the largest bias while the
unbiased sample one $\tilde p_m^*$ the lowest. As the assumption of
max-stability becomes increasingly more accurate, the bias of the
unbiased sample concurrence and extremal concurrence estimators
improve.  When this assumption holds exactly (indicated by
$n_0=\infty$), both of these estimators exhibit essentially no bias as
stipulated by the theory and seen in
Figure~\ref{fig:comparisonECPandSCP}. The sample concurrence
probability estimator appears to be biased in all situations---the
bias being less significant as the number of spectral functions is
larger. Interestingly, whatever the estimator considered, the bias and
variance appear to increase as the theoretical extremal concurrence
probability value $p$ becomes smaller. Overall the extremal
concurrence probability estimator $\hat p$ in~\eqref{eq:kendall-tau}
has the lowest variability.

\section{Concurrence of temperature extremes in continental USA}
\label{sec:application}

\begin{figure}
  \centering
  \includegraphics[width=\textwidth]{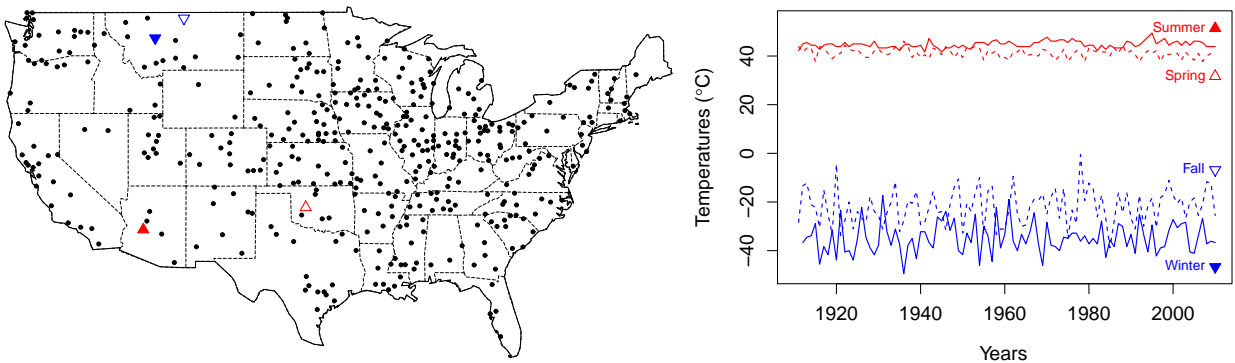}
  \caption{Left: Spatial distribution of the 424 weather stations. The
    triangles indicate the selected stations for the
    analysis---upward: daily maxima, downward: daily minima. Right:
    The seasonal extrema time series of the selected stations.}
\label{fig:study-region}
\end{figure}

In this section, we apply the developed methodology to estimate the
probabilities of concurrence associated with extreme
temperatures---both extreme cold and hot events. The data consists of
daily temperature minima and maxima recorded at 424 weather stations
over the period 1911--2010. The spatial distribution of these stations
is given in Figure~\ref{fig:study-region}. This data set, as a subset
of the United States Historical Climatological Network
\citet{USHCN:424stations}, was chosen as it meets very high data
quality standards and involves fewer than 2.4\% missing values while
spanning the entire territory of continental US\@. It can be freely
downloaded from \href{http://cdiac.ornl.gov}{http://cdiac.ornl.gov/}.

To avoid any seasonal influence on our results we decided to analyze
minima and maxima for each season separately.  We focus on the
concurrence of extreme cold (minima) during the Fall and Winter
seasons---generally color-coded in blue; and extreme hot (maxima)
during the Spring and Summer seasons---generally color-coded in red.
The right panel of Figure~\ref{fig:study-region} shows the times
series of these seasonal extrema for four selected weather
stations. These stations were selected as they recorded the top
seasonal records over the whole spatial and temporal domains. We can
see that all four time series of seasonal extremes (cold in blue and
hot in red) at these stations appear to be stationary without any
clear temporal trend.  This is in contrast with the generally accepted
trend of about $0.2^\circ{\rm C}$ per decade for average temperatures
\citep{ipcc2013}.

\begin{figure}
  \centering
  \includegraphics[width=\textwidth]{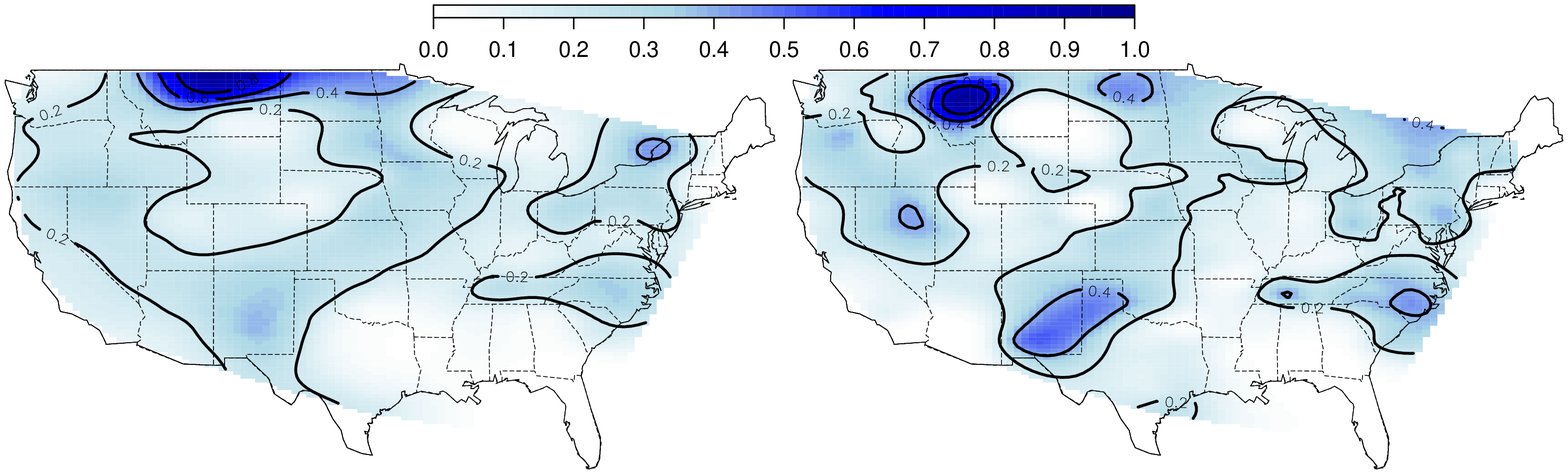}\\\vspace{-1.5em}
  \includegraphics[width=\textwidth]{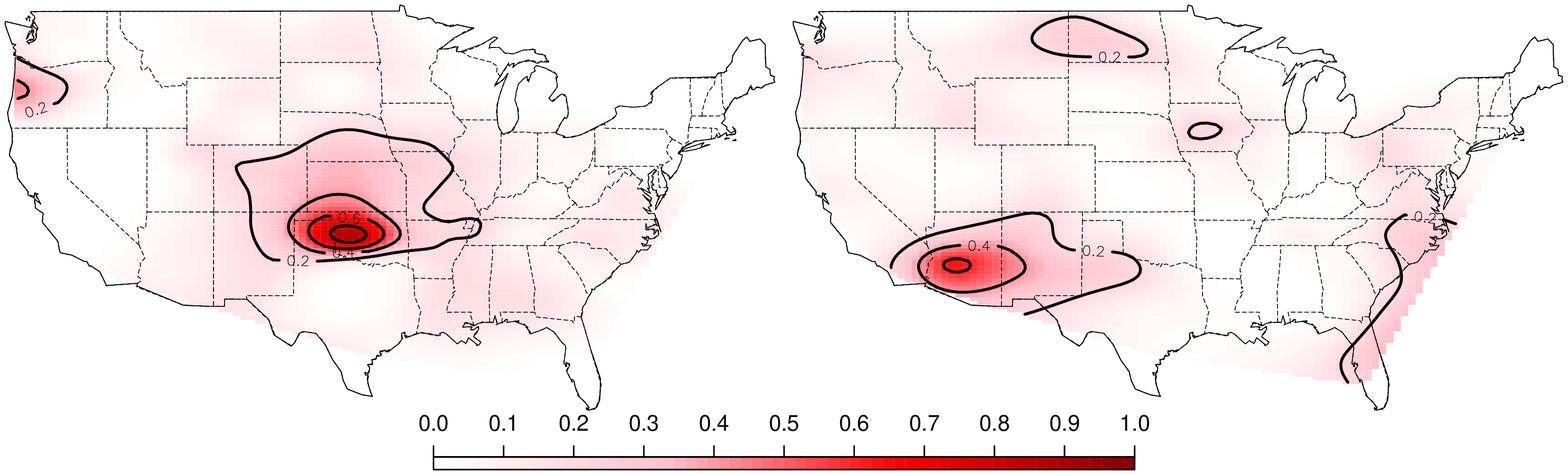}
  \caption{Maps of the extremal concurrence probability for the four
    selected stations. Top left: Fall (September, October November), top right:
    Winter (December, January, February), bottom left: Spring (March, April, May)
    and bottom right: Summer (June, July, August).}
\label{fig:concurrenceProbabilityAcrossSeasons}
\end{figure}

Figure~\ref{fig:concurrenceProbabilityAcrossSeasons} plots the
estimated spatial distribution of the extremal concurrence
probabilities function for each season, relative to the chosen
station.  More precisely, for a given origin location $s_0$, the maps
display estimates of the pairwise concurrence probability $p(s_0,s)$
as a function of $s$.  These maps were obtained by first computing the
estimator~\eqref{eq:kendall-tau} over all $423$ pairs of stations
$(s_0,s)$ and then interpolated using thin plate splines (on logit
scale) provided by the \texttt{R} package \texttt{fields}
\citep{fields}. As expected, the highest concurrence probability
occurs in the neighbourhood of the selected stations independently of
the season. The areal extent of high concurrence probabilities,
however, seem to be larger for minimum temperatures (cold extremes)
than for maximum temperatures (hot extremes).  This finding is
consistent with the physical notion of entropy, i.e., when the ambient
temperature is higher (Spring and Summer seasons), the entropy is
greater and hence involves less spatial dependence than for cooler
temperatures leading to smaller probability of simultaneous extremes.
This difference can be also attributed to the fact that extreme cold
temperatures are often due to high-pressure systems, which tend to
linger longer and cover a larger spatial area than warm fronts giving
rise to concurrence of extreme hot events.

\begin{figure}
  \centering
  \includegraphics[width=\textwidth]{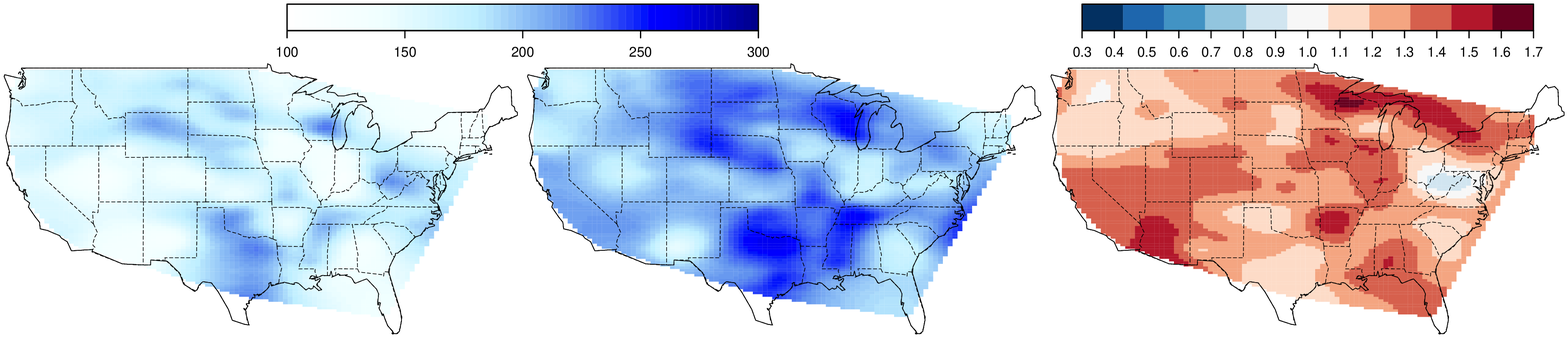}\\ \vspace{-1em}
  \includegraphics[width=\textwidth]{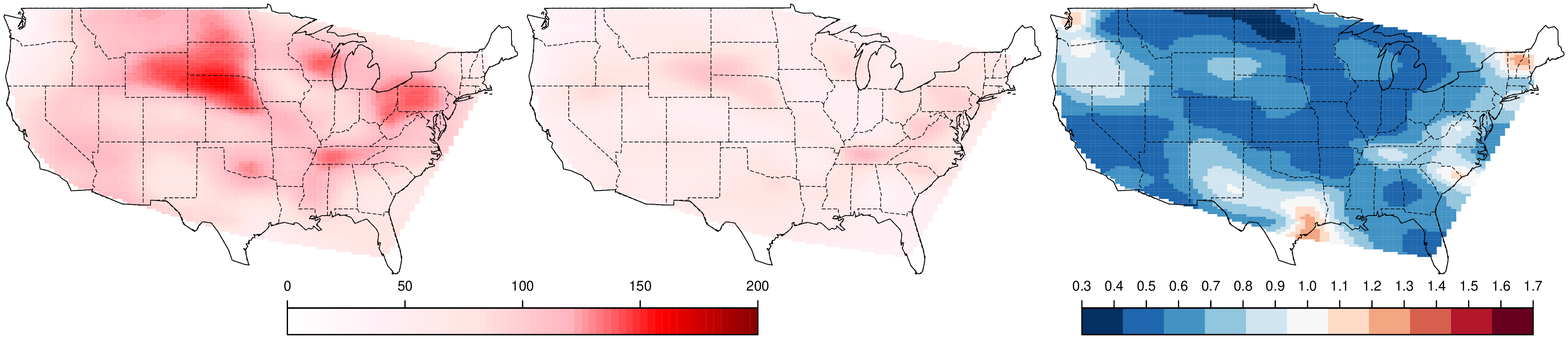}%
  \caption{Estimated spatial distribution of the expected extremal
    concurrence cell areas---in squared degree, i.e., around
    $1000~\mbox{km}^2$. From left to right: 1910--1950, 1951--2010,
    and their ratio (1951--2010 at the numerator). Top: Winter minima,
    bottom: Summer maxima.}
\label{fig:cellAreaDistribution}
\end{figure}

Although Figure~\ref{fig:concurrenceProbabilityAcrossSeasons} displays
interesting patterns, it has the drawback of being dependent on the
choice of the origin, i.e., the selected station. As stated in
Section~\ref{prop:ICP}, it is possible to bypass this hurdle by
considering the area of concurrence
cell. Figure~\ref{fig:cellAreaDistribution} plots the estimated
spatial distribution of the concurrence cell area for the
preindustrial period, i.e., 1910--1975, and the postindustrial one,
i.e., 1976--2010. To emphasize the possible impact of anthropogenic
influences, the ratio of these two cell areas is also reported. We can
see that during the last sixty years the expected cell area for winter
minima have increased of about 30\% over the whole USA while there is
a decrease of about the same amount for summer maxima. These findings
indicates that today's climate shows cold spells that have a larger
impact than in the beginning of the 20th century while hot spells are
more localized. Our results agree with the conclusions drawn by
\citet{ipccExtremes2012} who states that ``there is evidence from
observations gathered since 1950 of change in some extremes''. These
changes in the concurrence patterns of summer extremes can be
attributed to global warming since an increase in entropy generally
leads to more ``mixing" in the system and hence less dependence
leading to smaller areas of concurrence. The changes in concurrence
patterns of extreme cold events, however, are harder to explain. They
may be triggered by structural changes in important climatological
mechanisms such as the Arctic Oscillation.

\begin{figure}
  \centering
  \includegraphics[width=\textwidth]{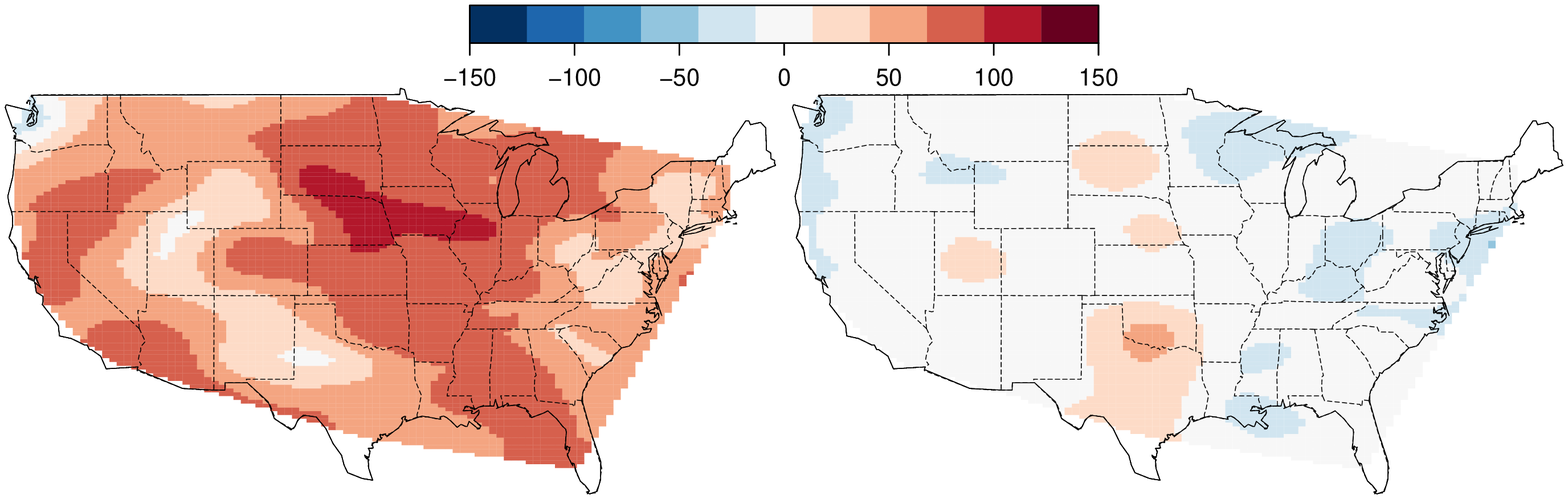}\\\vspace{-2.5em}
  \includegraphics[width=\textwidth]{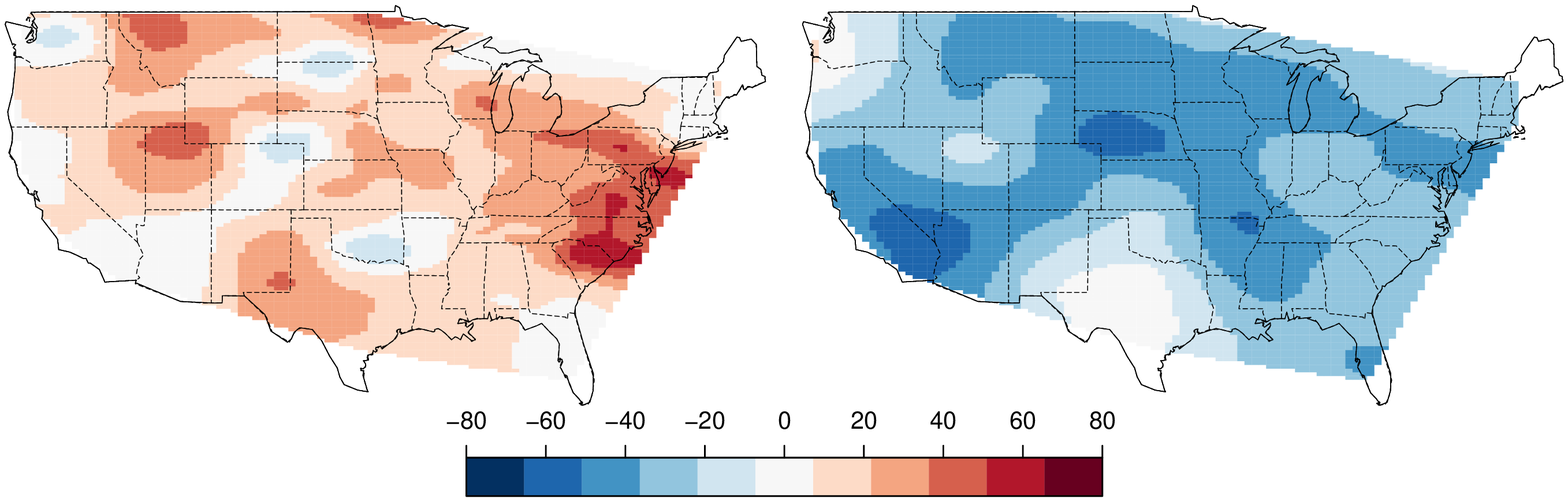}
  \caption{Spatial distribution of the estimated concurrence cell
    areas anomalies in squared degree, i.e., around $1000~\mbox{km}^2$
    for winter minima (top) and summer maxima (bottom). The data were
    stratified into three classes: El Niño, La Niña and the base class
    ``La Nada''. The left panels show the anomalies for El Niño, the
    right ones La Niña.}
\label{fig:cellAreaElNinho_vs_LaNinha} 
\end{figure} 

Finally, we consider another cut of the data by stratifying according
to an important climate phenomenon known as the El Niño Southern
Oscillation (ENSO).  Positive ENSO (El Niño) refers to the event of a
warm-up of the surface water in the central and east-central
equatorial Pacific ocean. It is well known that years with high ENSO
have a general warming effect in North America during the winter
season.  The opposite effect of negative ENSO (La Niña) is
characterized by a cool-down in the same area of the Pacific and it
generally leads to unusually cold winters in the northwestern part of
the US, northern California and the north-central
states~\cite{LaNinha_explanation}.
Figure~\ref{fig:cellAreaElNinho_vs_LaNinha} shows estimates of
concurrence cell areas anomalies for winter minima and summer
maxima. These anomalies were defined as pointwise deviations from the
expected cell area obtained from La Nada seasons, i.e., neither El
Niño nor La Niña seasons. We can see that La Niña does not seem to
have an impact on the spatial coverage of winter minima but that El
Niño seems to induce more massive cold extremes over the whole
USA\@. For the summer season, La Niña seems to reduce the spatial
extent of heat waves over the whole USA while El Niño has a less
pronounced impact---although it generally yields larger spatial
coverages especially along the East coast.

\section{Discussion}

In this paper we introduced a new framework for the analysis of
dependence of extremes: the \emph{extremal/sample concurrence
  probability}. This tool plays a similar role to that of the extremal
coefficient but has the benefit, as a probability, of being more
interpretable and intuitive. Theoretical properties and closed forms
of these concurrence probabilities have been established and several
estimators have been proposed. A simulation study has shown that the
proposed estimators work well in practice and that they give a new
insight about the dependence of extremes, such as the spatial
distribution of the expected concurrence cell area of extreme
temperature in the continental US\@.

\section*{Acknowledgements}

M. Ribatet was partly funded by the MIRACCLE-GICC and McSim ANR
projects. The authors gratefully acknowledge the help of Prof. Paul
H. Whitfield with the interpretation of the results on concurrence for
temperature extremes and also for the suggestion to stratify by El
Niño/La Niña effect.

\appendix{}

\section{Proofs}
\label{sec:proofs}

\subsection{Proof of the continuity of $\Pi$ in Theorem~\ref{thm:conv}}
\label{sec:proof-continuity-theta}

Consider a sequence $\Psi_n \to \Psi$ in
$\mathscr{M}_p([0, \infty]^k \setminus \{0\})$ and let
$\Psi = \{\psi_i\colon i \geq 1\}$ and
$\Psi_n = \{\psi_{n,i}\colon i \geq 1\}$. As compact subsets of
$[0, \infty]^k \setminus \{0\}$ are bounded away from $0$, we can
choose $\varepsilon > 0$ such that
\begin{equation*}
  \begin{cases}
    \max_{i \geq 1} \psi_i(s_j) > \varepsilon, & j = 1, \ldots, k\\
    \psi_i(s_j) \neq \varepsilon, & j = 1, \ldots, k,\  i \geq 1.
  \end{cases}
\end{equation*}
Then $K_\varepsilon=[0, \infty]^k \setminus [0, \varepsilon)^k$ is a
compact set, $\Psi \cap K_\varepsilon $ has finitely many points
$\psi_1, \ldots,\psi_{N_\varepsilon}$ and no point of $\Psi$ lies on
the boundary $\partial K_\varepsilon$.  The convergence
$\Psi_n \to \Psi$ entails that for $n$ large enough,
$\Psi_n \cap K_\varepsilon$ has the same number of points
$\psi_{n,1}, \ldots, \psi_{n,N_\varepsilon}$ that can be reordered in
such a way that $\psi_{n,i} \to \psi_i$ as $n\to \infty$,
$i = 1, \ldots, N_\varepsilon$.

Assume that the maxima
$\max_{1 \leq i \leq N_\varepsilon} \psi_i(s_j)$, $j=1, \ldots, k$,
are uniquely attained.  Then the hitting scenario $\Pi(\Psi)$ is well
defined and depends only on
$\{\psi_{i}\colon i = 1, \ldots, N_\varepsilon\}$. By the convergence
$\psi_{n,i}\to\psi$, for large $n$ the maxima
$\max_{1\leq i\leq N_\varepsilon} \psi_{n,i}(s_j)$ are uniquely
attained so that the hitting scenario $\Pi(\Psi_n)$ is well defined
and depends only on
$\{\psi_{n,i}\colon i = 1, \ldots N_\varepsilon\}$.  It is not
difficult to see (although tedious to write formally) that the
convergence
$\{\psi_{n,i}\colon i = 1, \ldots, N_\varepsilon\} \to
\{\psi_{i}\colon i = 1, \ldots, N_\varepsilon\}$
implies $\Pi(\Psi_n)=\Pi(\Psi)$ for large $n$. This proves the
announced continuity for the mapping $\Pi$.

\subsection{Proof of Proposition~\ref{prop:bias-estimate}}
\label{sec:proof-bias-estimate}
 
 We shall prove below the following formula, which may be of
 independent interest.
 
 \begin{lem}\label{lem:pm-p}
   In the context of Proposition~\ref{prop:bias-estimate}, we have
   \begin{equation}\label{eq:bias-estimate}
     p_m=p+\sum_{\ell=2}^k \Pr(|\pi|=\ell)m^{1-\ell}
   \end{equation}
   where $\pi$ is the extremal hitting scenario and $|\pi|$ its number
   of components.
\end{lem}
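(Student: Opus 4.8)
The plan is to exploit the Poisson point process representation~\eqref{eq:spectralCharacterizationv2} together with the marking theorem for Poisson processes. Since the sample hitting scenario is invariant under increasing marginal transformations, I may assume that $X=\eta$ has standard (unit Fréchet) margins and admits the representation $\eta(s)=\max_{\varphi\in\Phi}\varphi(s)$ with $\Phi$ a Poisson point process of intensity $\Lambda$. Realizing the i.i.d.\ copies $X_1,\ldots,X_m$ from i.i.d.\ Poisson processes $\Phi_1,\ldots,\Phi_m$, the componentwise maximum $M_m$ is generated by the superposition $\Phi^{(m)}=\bigcup_{i=1}^m\Phi_i$, a Poisson process of intensity $m\Lambda$. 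By the homogeneity of $\Lambda$ this yields the max-stability identity $M_m\stackrel{\rm d}{=}m\eta$; in particular the extremal hitting scenario $\pi^{(m)}$ read off from $\Phi^{(m)}$ has the same law as $\pi$, the extremal hitting scenario of $\eta$.

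The key observation is the interplay between two partitions of $\{1,\ldots,k\}$. First, $\pi^{(m)}$ groups the sites according to which individual point of $\Phi^{(m)}$ attains the maximum; second, the sample hitting scenario $\pi_m$ groups them according to which observation index $i\in\{1,\ldots,m\}$ attains the maximum. Recording for each point of $\Phi^{(m)}$ the label $i$ of the copy it came from turns $\Phi^{(m)}$ into a marked Poisson process whose marks are, by the marking/thinning theorem, i.i.d.\ uniform on $\{1,\ldots,m\}$ and independent of the point locations. Consequently $\pi_m$ is exactly the coarsening of $\pi^{(m)}$ obtained by merging those blocks whose maximizing points share a common label, and concurrence $\pi_m=\{1,\ldots,k\}$ occurs precisely when all the maximizing points carry the same label.

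I would then condition on $|\pi^{(m)}|=\ell$. Continuity of the margins guarantees that the $\ell$ maximizing points are almost surely distinct, so their labels are $\ell$ independent uniform draws from $\{1,\ldots,m\}$, and the probability that all $\ell$ coincide is $\sum_{c=1}^m m^{-\ell}=m^{1-\ell}$. Averaging over $\ell$ and using $\pi^{(m)}\stackrel{\rm d}{=}\pi$ gives $p_m=\sum_{\ell=1}^k\Pr(|\pi|=\ell)\,m^{1-\ell}$. Splitting off the $\ell=1$ term, which equals $\Pr(|\pi|=1)=\Pr(\pi=\{1,\ldots,k\})=p$, produces exactly~\eqref{eq:bias-estimate}.

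The main obstacle is the rigorous justification of the conditional uniformity and independence of the labels: one must argue that conditioning on the geometric event $\{|\pi^{(m)}|=\ell\}$, which is a functional of the point locations alone, does not disturb the i.i.d.\ uniform law of the marks. This is precisely where the marking theorem does the real work, since it decouples the marks from the spatial configuration of $\Phi^{(m)}$. One should also verify the almost sure uniqueness of the maximizers used throughout, which follows from the continuity of the margins exactly as in the proof of Theorem~\ref{thm:conv}. With~\eqref{eq:bias-estimate} in hand, the remaining assertions of Proposition~\ref{prop:bias-estimate}---monotonicity of $p_m$ in $m$, the bound $0\le p_m-p\le(1-p)/m$, and the sharp rate $(p_m-p)\sim c_r/m^r$---follow by reading off the leading nonvanishing term of the finite sum.
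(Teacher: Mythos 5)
Your proposal is correct and follows essentially the same route as the paper: both realize the $m$ copies via i.i.d.\ Poisson processes, use max-stability of the superposition to identify the law of its hitting scenario with that of $\pi$, and then observe that the labels recording which copy each point came from are i.i.d.\ uniform on $\{1,\ldots,m\}$ independent of the configuration, so that all $\ell$ maximizing functions share a label with probability $m^{1-\ell}$. Your only departure is organizational---you condition uniformly on $|\pi^{(m)}|=\ell$ for all $\ell\geq 1$ and invoke the marking theorem explicitly, whereas the paper splits off the extremal-concurrence event $B$ first and treats $\ell\geq 2$ separately---but the key probabilistic ingredient is identical.
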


Proposition~\ref{prop:bias-estimate} follows directly from
\eqref{eq:bias-estimate}.  Indeed, the monotonicity of $p_m$ is
immediate and since $p = \Pr(|\pi| = 1)$, we have
$$
0\leq p_m - p = \sum_{\ell=2}^k \frac{\Pr(|\pi|=\ell)}{m^{\ell-1}}
\leq \frac{1}{m} \sum_{\ell=2}^k \Pr(|\pi|=\ell) = \frac{(1-p)}{m}.
$$
If $p<1$, then at least one of the probabilities $\Pr(|\pi| = \ell)$,
$\ell = 2,\ldots,k$ is non-zero and by~\eqref{eq:bias-estimate} the
asymptotic equivalence $(p_m-p)\sim c_r/m^r$, $m\to\infty$ holds where
$r\in\{1,\ldots,k-1\}$ is the smallest integer, such that
$c_r= \Pr(|\pi| = r+1)>0$.

\begin{proof}[Proof of Lemma~\ref{lem:pm-p}]
Since $Z_1, \ldots, Z_m$ are independent with the same distribution as
$\eta$, we can suppose from~\eqref{eq:spectralCharacterizationv2} that
\begin{equation*}
  Z_i(s)=\max_{\phi\in \Phi_i} \phi(s),\qquad s\in\mathcal{X},
\end{equation*}
with $\Phi_1,\ldots,\Phi_m$ independent copies of $\Phi$. By
max-stability, $\bar \eta=m^{-1}\max_{1\leq i\leq m}Z_i$ has the same
distribution as $\eta$ and $\bar\Phi=\cup_{1\leq i\leq
  m}\{m^{-1}\phi,\phi\in \Phi_i\}$ has the same distribution as
$\Phi$. We consider the following events
\begin{align*}
  A&= \{\text{sample concurrence occurs for $Z_1,\ldots,Z_m$} \}\\
  B&=\{\text{extremal concurrence occurs for $\bar \eta$} \}
\end{align*}
Clearly, $\Pr(A)=p_m$ and $\Pr(A)=\Pr(A\cap B)+\Pr(A\cap B^c)$ with
$B^c$ the complementary set of $B$. We analyze the two terms
separately.

Observe first that if extremal concurrence occurs then we also have
sample concurrence. Indeed, if $B$ occurs, then one function of
$\bar\Phi$ dominates all the others at $(s_1,\ldots,s_k)$.  This
function is of the form $m^{-1}\phi$ with $\phi\in\Phi_i$, for some
$i =1,\ldots,m$, showing that $Z_i$ dominates $Z_1,\ldots,Z_m$, i.e.,\
we have sample concurrence. Hence $B\subset A$ and
$\Pr(A\cap B)=\Pr(B)=p$. We now consider the second term and the event
$A\cap B^c$, i.e., sample concurrence occurs in $Z_1,\ldots,Z_m$ but
not extremal concurrence for $\bar \eta$. Let $\bar\pi$ be the hitting
scenario of $\bar\eta$. We know that $B^c$ is equivalent to
$|\bar\pi|\geq 2$, i.e.\ the maximum at locations $(s_1,\ldots,s_k)$
is attained by at least two functions in $\bar\Phi$.  These functions
are of the form $m^{-1}\phi_j$, $1\leq j\leq \ell$, with
$\ell=|\bar \pi|$ and $\phi_j\in \Phi_{i_j}$ for some
$1\leq i_j\leq m$. If $A$ is also realized, i.e., some $Z_i$ dominates
$Z_1,\ldots,Z_m$, then we must have $i_1=\cdots=i_\ell=i$. Note,
however, that since the point processes $\Phi_i$, $i=1, \ldots, m$ are
independent and identically distributed, any given function
$m^{-1}\phi\in \bar\Phi \equiv \cup_{i=1}^m m^{-1}\Phi_i$,
independently from the others, has equal chance of coming from any one
of the $m$ point processes $m^{-1}\Phi_i$, $i=1,\ldots,m$.  Therefore,
the probability that all $\ell$ functions contributing to the maximum
at sites $(s_1, \ldots, s_k)$ are assigned to component $i$ is
$m^{-\ell}$. Since there are $m$ possible choices for the index $i$,
we deduce
\begin{equation*}
  \Pr(A\cap B^c)=\sum_{\ell=2}^k \Pr(|\bar \pi|=\ell) m^{1-\ell}.
\end{equation*}
Equation~\eqref{eq:bias-estimate} follows.
\end{proof}

\subsection{Proofs of Theorems~\ref{thm:CLT-1} and~\ref{thm:CLT-2}}
\label{sec:proofs-CLT}

In the context of these two theorems, we have
\begin{equation}\label{e:pm-p}
  p_m - p \sim c_r m^{-r}, \qquad m\to\infty,
\end{equation}
for some $r \in \{1,\ldots,k-1\}$ and
$c_r>0$---cf. Proposition~\ref{prop:bias-estimate}.  Let
$S_n = [n/m]\hat p_m = \sum_{i=1}^{[n/m]} \xi_{i,n}$, where
$\xi_{i,n}$ are iid Bernoulli$(p_m)$.

\begin{proof}[Proof of Theorem~\ref{thm:CLT-1}] 
Let $B_n = {\rm Var}(S_n) = [n/m] p_m(1-p_m)$ and introduce the cumulative
distribution function
\begin{equation*}
  F_n(x)  = \Pr \left[ B_n^{-1/2} \{S_n - \E( S_n) \}  \leq x \right]
  \equiv \Pr\left\{ \sqrt{\frac{[n/m]}{p_m(1-p_m)}} (\hat p_m - p_m)
    \le x\right\}, \qquad x \in \mathbb R.
\end{equation*}
The Berry--Essen theorem (see e.g.\ Theorem V.2.3 in
\cite{petrov:1975}) implies that
\begin{equation}\label{e:BE-bound}
  \sup_{x\in \mathbb R} | F_n(x) - \Phi(x)| \le A L_n,
\end{equation}
where $A$ is an absolute constant, $\Phi(x)$ denotes the standard
Normal cumulative distribution function and
\begin{equation*}
  L_n = B_n^{-3/2}\sum_{i=1}^{[n/m]} \E \left( |\xi_{i,n} - p_m|^3 \right).
\end{equation*}
Using that $\E (|\xi_{i,n} - p_m|^{3}) \le p_m(1-p_m)$ and
straightforward algebra, we obtain
\begin{equation}\label{e:BE-bound-L}
L_n \le \left[  (n/m) p_m(1-p_m)\right]^{-1/2}.
\end{equation}

Since $0<p<1$, we have $p_m(1-p_m)\sim p(1-p) >0$ as $n\to\infty$ and,
for an arbitrary choice of $m= m(n) = o(n)$ as $n\to\infty$, we have
$L_n\to 0$. This, in view of~\eqref{e:BE-bound} yields
\begin{equation}
\label{e:BE-bound-conv}
  \sqrt{n/m} (\hat p_m - p_m) \longrightarrow N\{0,p(1-p)\}.
\end{equation}
By using~\eqref{e:pm-p} and Slutsky's theorem, we obtain the final result.
\end{proof}

\begin{proof}[Proof of Theorem~\ref{thm:CLT-2}]
  In case $\lambda<\infty$, since $p=0$, from~\eqref{e:pm-p}, we have
  $p_m\sim c_r/m^r,\ n\to\infty$, and hence
  \begin{equation}\label{e:CLT-2}
    [n/m] p_m \sim c_r n/m^{(r+1)} \longrightarrow
    c_r/\lambda^{(r+1)}, \qquad n\to\infty. 
  \end{equation}
  Thus, the standard Poisson convergence for the Binomial$(p_m,[n/m])$
  random variables $S_n = [n/m] \hat p_m$ yields the result.

  When $\lambda = \infty$, by~\eqref{e:CLT-2}, we have
  $x_n = [n/m] p_m\to 0$ as $n\to\infty$. Therefore,
  \begin{equation*}
    \Pr(\hat p_m = 0) = (1-p_m)^{[n/m]} = \left(1-
      \frac{x_n}{[n/m]}\right)^{[n/m]} \sim \exp(-x_n) \longrightarrow
    1, \qquad n \to \infty,
  \end{equation*}
  which completes the proof.
\end{proof}

\bibliography{biblio}
\bibliographystyle{apalike}
\end{document}